\newtheorem{theorem}{Theorem}[section]
\newtheorem{lemma}[theorem]{Lemma}
\newtheorem{definition}[theorem]{Definition}
\newtheorem{corollary}[theorem]{Corollary}
\newcommand{\ph}{\varphi}
\newcommand{\NN}{\mathbb{N}}
\newcommand{\ZZ}{\mathbb{Z}}
\newcommand{\XX}{\mathcal{X}}
\newcommand{\YY}{\mathcal{Y}}
\renewcommand{\ZZ}{\mathcal{Z}}
\title[Metastability in the Furstenberg-Zimmer tower]
{Metastability in the Furstenberg-Zimmer tower}
\author{Jeremy Avigad}
\address{Department of Philosophy and Department of Mathematical
  Sciences\\
Carnegie Mellon University\\
Pittsburgh, PA 15213}
\email{avigad@cmu.edu}
\thanks{Avigad's work has been partially supported by NSF grant
 DMS-0700174 and a grant from the John Templeton Foundation.}
\author{Henry Towsner}
\address{Department of Mathematics\\
University of California\\
Los Angeles, CA 90095-1555}
\email{htowsner@gmail.com} 
\thanks{Some of
 Towsner's work was carried out while he was a participant in the
 Semester in Ergodic Theory and Additive Combinatorics at the
 Mathematical Sciences Research Institute.}
\begin{document}

\begin{abstract}
  According to the Furstenberg-Zimmer structure theorem, every
  measure-preserving system has a maximal distal factor, and is
  weak mixing relative to that factor. Furstenberg and Katznelson used
  this structural analysis of measure-preserving systems to provide a
  perspicuous proof of Szemer\'edi's theorem. Beleznay and Foreman
  showed that, in general, the transfinite construction of the maximal
  distal factor of a separable measure-preserving system can extend
  arbitrarily far into the countable ordinals. Here we show that the
  Furstenberg-Katznelson proof does not require the full strength of
  the maximal distal factor, in the sense that the proof only depends
  on a combinatorial weakening of its properties. We show that this
  combinatorially weaker property obtains fairly low in the
  transfinite construction, namely, by the $\omega^{\omega^\omega}$th
  level.
\end{abstract}

\maketitle

\section{Introduction}

Let $\XX = (X, \mathcal B, \mu, T)$ be a measure preserving system,
that is, a finite measure space $(X, \mathcal B, \mu)$ together with a
measure-preserving transformation, $T$. A ($T$-invariant) factor $\YY$
of such a system is said to be \emph{distal} if it is the last element
of an increasing finite or transfinite sequence $(\YY_\alpha)_{\alpha
  \leq \theta}$ of factors, such that $\YY_0$ is the trivial factor,
for each $\alpha < \theta$, $\YY_{\alpha+1}$ is compact relative to
$\YY_\alpha$, and for each limit ordinal $\gamma \leq \theta$,
$\YY_{\mathcal \gamma}$ is the limit of the preceding factors. A
structural analysis due to Furstenberg and Zimmer, independently,
shows that every measure preserving system has a maximal distal
factor, and is weak mixing relative to that factor (see
\cite{furstenberg:77,furstenberg:katznelson:79,furstenberg:et:al:82}).

Furstenberg \cite{furstenberg:77} proceeded to give an
ergodic-theoretic proof of Szemer\'edi's theorem that used only a
finite sequence of compact extensions of the trivial factor. But he
noted, in passing, that one could give an alternate proof using the
maximal distal factor. Furstenberg and Katznelson
\cite{furstenberg:katznelson:79,furstenberg:81} in fact used this
strategy to prove a multidimensional generalization of Szemer\'edi's
theorem. Even for the original version of the theorem, the
Furstenberg-Katznelson proof (which draws on ideas from Ornstein, and is
presented in \cite{furstenberg:et:al:82}) is perhaps the cleanest and most
perspicuous proof of Szemer\'edi's theorem to date.

Beleznay and Foreman~\cite{beleznay:foreman:96} have shown that for
the separable spaces that arise in the proofs of Szemer\'edi's
theorem, the transfinite construction of the maximal distal factor can
extend arbitrarily far into the countable ordinals.  It is therefore
striking that the proof of a finitary combinatorial result can make
use of such a transfinite construction in an essential
way.

Our goal here is to provide a precise sense in which the
Furstenberg-Katznelson proof does not ``need'' the full transfinite
hierarchy. Specifically, we show that the argument does not require
that $\XX$ is weak mixing relative to a distal factor $\YY$; rather,
it is enough to know that $\YY$ is a limit of distal factors with respect
to which $\XX$ exhibits sufficient approximations to weak mixing
behavior. We show that such distal factors always occur fairly low
down in the transfinite hierarchy, in fact, by the $\omega^{\omega^\omega}$th
level. This helps clarify the combinatorial role of the maximal distal
factor in the Furstenberg Katznelson argument, and the axiomatic
strength needed to carry out the proof.

A central theme here is that if instead of exact limits one is
interested in having only sufficiently large pockets of approximate
stability, one can often obtain better bounds, uniformity, and/or
computability results. We referred to this phenomenon as ``local
stability'' in \cite{avigad:et:al:unp}; Tao \cite{tao:08,tao:08b} has
used the term ``metastability'' in a similar sense. In particular, we
will rely on a metastability analysis of the mean ergodic theorem due to 
Kohlenbach and Leu\c{s}tean \cite{kohlenbach:leustean:unp}.

The outline of this paper is as follows. In
Section~\ref{preliminaries:section}, we briefly outline the
Furstenberg-Katznelson proof of Szemer\'edi's theorem, introducing the
relevant definitions. In Section~\ref{results:section}, we state our
main results, which are then proved in Sections~\ref{met:section} to
\ref{all:orders:section}. In Section~\ref{logic:section}, we describe
the logical methods that underlie our work, and draw conclusions about
the axiomatic strength of the principles needed in the
Furstenberg-Katznelson proof.

We are very grateful to our anonymous referees for comments, suggestions, and
corrections, and to Ulrich Kohlenbach for helping us simplify the proofs in Section~\ref{met:section}.

\section{Preliminaries}
\label{preliminaries:section}

Szemer\'edi's Theorem states that for every $k$ and $\delta > 0$ there
is an $N$ large enough so that if $S$ is any subset of
$\{1,2,\ldots,N\}$ with density at least $\delta$, then $S$ contains
an arithmetic progression of length $k$.
Furstenberg~\cite{furstenberg:77} showed that this is equivalent to
the statement that for every measure preserving system $\XX$, every
$k$, and every set $A$ of positive measure, there is an $n$ such that
$\mu(\bigcap_{l<k}T^{-ln}A)>0$. We will henceforth refer to this
measure-theoretic equivalent as Szemer\'edi's theorem.

The $T$-invariant factors of a measure-preserving system $(X, \mathcal
B, \mu, T)$ are naturally identified with the sub-$\sigma$-algebras
$\mathcal B'$ of $\mathcal B$ that are closed under the map $A \mapsto
T^{-1}A$. It is fruitful to adopt a Hilbert-space perspective, and
consider the space $L^2(\XX)$ of square integrable functions on $\XX$,
with the isometry $\hat T$ which maps $f$ to $f \circ T$. Any
$T$-invariant factor gives rise to the $\hat T$-invariant subspace
$\mathcal Y$ of $\mathcal B'$-measurable functions of $L^2(\XX)$. This
space contains all the constant functions, and is closed under the map
$f \mapsto \max(f,0)$. Conversely, any such space gives rise to a
corresponding factor. We will henceforth use $T$ instead of $\hat T$
to denote the relevant isometry on $L^2(\XX)$, and use the term
``factor of $\XX$'' to mean a $T$-invariant subspace of $L^2(\XX)$
containing the constant functions and closed under the map $f \mapsto
\max(f,0)$. If $A$ is an element of $\mathcal B$, ``$A$ in $\YY$''
means that the characteristic function $\chi_A$ of $A$ is in $\YY$,
which amounts to saying that $A$ is in the corresponding
$\sigma$-algebra.

If $\YY$ is a factor of $\XX$, the expectation operator $E(f \mid
\YY)$ denotes the projection of $f$ onto $\YY$. More information about
factors and the expectation operator can be found, say, in
\cite{furstenberg:81}. For the most part, we will be able to restrict
our attention to the subset $L^\infty(\XX)$ of essentially bounded
elements of $L^2(\XX)$, and we will use $L^\infty(\YY)$ to denote the
essentially bounded elements of the factor $\YY$.

The Furstenberg-Zimmer structure theorem shows that any
measure-preserving system $\XX$ has a \emph{maximal distal factor},
that is, a factor $\YY$ that is built up using a transfinite sequence
of compact extensions; and that $\XX$ is weak mixing relative to
$\YY$. We now briefly review the definitions and provide a more
precise statement of the theorem.

\begin{definition}
 If $\YY$ is a factor of $\XX$, we say $\XX$ is \emph{weak mixing
   relative to $\YY$} if for every $f$ and $g$ in $L^\infty(\mathcal
 X)$,
\[
\lim_{n\rightarrow\infty}\frac{1}{n} \sum_{i<n} \int
\left[E(fT^ig\mid\YY)-E(f\mid\YY)E(T^ig\mid\YY)\right]^2 d\mu=0.
\]
\end{definition}

The following lemma presents two important consequences of relative
weak mixing. The first provides a sense in which weak mixing
extensions are also ``weak mixing of all orders.'' The second shows
that if $\XX$ is weak mixing relative to $\YY$, then $\YY$ is
``characteristic'' for the averages of the form $\frac{1}{n} \sum_{i <
  n} \prod_{l < k} T^{ln} f_l$, in the sense that only the
projections of $f_0, \ldots, f_{k-1}$ on $\YY$ bear on the limiting
behavior.

\begin{lemma}
\label{all:orders:lemma}
 Suppose $\XX$ is weak mixing relative to $\YY$. Then for every $k$
 and for all functions $f_0,\ldots,f_k$ in $L^\infty(\XX)$, the
 following hold:
\[
\lim_{n\rightarrow\infty} \frac{1}{n}\sum_{i<n} \int
 \left(E(\prod_{l<k} T^{li} f_l \mid \YY) - \prod_{l<k} T^{li} E(f_l \mid
   \YY) \right)^2 d\mu = 0.
\]
and
\[
\lim_{n\rightarrow\infty} \left\| \frac{1}{n}\sum_{i<n}
 \left(\prod_{l<k} T^{li} f_l - \prod_{l<k} T^{li} E(f_l \mid
   \YY) \right) \right\|_{L^2(\XX)} = 0.
\]
\end{lemma}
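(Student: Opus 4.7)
Both statements are proved by simultaneous induction on $k$, with van der Corput's inequality as the main tool. For $k = 2$, the first statement is simply the definition of relative weak mixing (after observing that $T$ commutes with $E(\cdot\mid\YY)$, so $\prod_{l<2}T^{li}E(f_l\mid\YY) = E(f_0\mid\YY)\cdot E(T^if_1\mid\YY)$). The $k=2$ case of the second statement is then derived from the first: splitting $f_l = E(f_l\mid\YY) + \tilde f_l$ decomposes $u_i = f_0 T^i f_1 - E(f_0\mid\YY)T^i E(f_1\mid\YY)$ into three cross terms, each containing a factor of zero conditional expectation, and van der Corput reduces the $L^2$-norm estimate on $\frac{1}{n}\sum_i u_i$ to controlling expressions of the form $\int \tilde f_0^2 \cdot T^i(\tilde f_1\cdot T^h\tilde f_1)\, d\mu$. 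Averaging first in $i$ (by the mean ergodic theorem) and then in $h$ (by relative weak mixing) makes these vanish.

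For the inductive step on the second statement, set $u_i = \prod_{l<k}T^{li}f_l - \prod_{l<k}T^{li}E(f_l\mid\YY)$ and apply van der Corput. The key computation, using $T$-invariance of $\mu$ together with the multiplicativity of the shift on functions, is
\[
\langle \prod_{l<k}T^{li}f_l,\; \prod_{l<k}T^{l(i+h)}f_l\rangle = \int f_0^2 \cdot \prod_{l=1}^{k-1}T^{li}(f_l \cdot T^{lh}f_l)\, d\mu,
\]
the $l=0$ factor collapsing to the $i$-independent $f_0^2$ while the $l\geq 1$ factors combine into $T^{li}(f_l\cdot T^{lh}f_l)$. Factoring out a common $T^i$, re-indexing $m = l-1$, and applying Cauchy-Schwarz against $\|f_0^2\|_{L^2}$ reduces the problem to controlling the $L^2$-norm of a length-$(k-1)$ Cesaro average in the diagonal functions $\psi_m^h = f_{m+1}\cdot T^{(m+1)h}f_{m+1}$, to which the inductive hypothesis directly applies; the same reduction handles the cross terms involving $\prod T^{li}E(f_l\mid\YY)$. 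The first statement in the inductive step is treated in parallel, either by running the corresponding induction on the $\YY$-valued sequence $F_i = E(u_i\mid\YY)$ (using $\int F_i^2\, d\mu = \int u_i F_i\, d\mu$) or, equivalently, by applying the second statement inside the relative product system $\XX \times_\YY \XX$, where $\int E(\prod T^{li}f_l\mid\YY)^2\, d\mu$ becomes the squared $L^2$-norm of $\prod (T\times T)^{li}(f_l\otimes f_l)$, and weak mixing of $\XX$ relative to $\YY$ passes to the relative product.

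The main obstacle is uniformity in $h$: the inductive hypothesis at length $k-1$ yields an $L^2$-limit for each fixed $h$, but we need the ``error terms'' $\prod_{m<k-1}T^{mi}E(\psi_m^h\mid\YY)$ to vanish on average in $h$. This is overcome by invoking relative weak mixing a second time --- at the base length $2$, applied to the diagonals $f_{m+1}\cdot T^{(m+1)h}f_{m+1}$ --- to compare $E(\psi_m^h\mid\YY)$ with the factored expression $E(f_{m+1}\mid\YY)\cdot T^{(m+1)h}E(f_{m+1}\mid\YY)$ on average in $h$, thereby closing the van der Corput estimate and completing the induction.
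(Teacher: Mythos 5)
Your outline is, in essence, the Furstenberg--Katznelson--Ornstein argument that the paper itself parallels in Section~\ref{all:orders:section}: a simultaneous induction on $k$, a van der Corput estimate to pass from the conditional-expectation statement at length $k$ to the norm statement at length $k+1$ (cf.\ Lemma~\ref{wmstrong}), and a passage to the relative square $\XX\times_\YY\XX$ --- where relative weak mixing must be shown to persist, cf.\ Lemma~\ref{wmdim} --- to recover the conditional-expectation statement (cf.\ Lemma~\ref{wmweak}). Two small inaccuracies: $\int E(\prod_{l} T^{li}f_l\mid\YY)^2\,d\mu$ is the \emph{integral} of $\prod_{l}(T\times T)^{li}(f_l\otimes f_l)$ over $\XX\times_\YY\XX$, not its squared $L^2$-norm; and the persistence of relative weak mixing under $\times_\YY$ is only asserted, though its proof is a short computation with $E\bigl((f_1\otimes f_2)(T\times T)^i(g_1\otimes g_2)\mid\YY\bigr)=E(f_1T^ig_1\mid\YY)E(f_2T^ig_2\mid\YY)$. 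Your ``uniformity in $h$'' concern is not actually an obstacle in this qualitative setting: in van der Corput one fixes $H$, lets $n\to\infty$ over the finitely many $|h|<H$, and only then lets $H\to\infty$.

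The genuine gap is your treatment of the unshifted $l=0$ factor in the norm statement. With $u_i=\prod_{l<k}T^{li}f_l-\prod_{l<k}T^{li}E(f_l\mid\YY)$, the claim that every cross term of the decomposition $f_l=E(f_l\mid\YY)+\tilde f_l$ is controlled by van der Corput fails for the term whose only zero-expectation factor is $\tilde f_0$: there the correlation is $\int \tilde f_0^{\,2}\,\prod_{l\geq 1}T^{li}\bigl(E(f_l\mid\YY)\,T^{lh}E(f_l\mid\YY)\bigr)d\mu$, which does not become small after averaging in $i$ and $h$ (take all $f_l\equiv 1$ for $l\geq 1$: it is $\|\tilde f_0\|^2$ identically). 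Indeed the second display is false as literally written: for $k=2$, $f_1\equiv 1$ it would force $\|f_0-E(f_0\mid\YY)\|=0$. The first display is fine with $l=0$ included, because there the unshifted factor sits inside conditional expectations on both sides; but the intended norm statement --- and the one the paper actually proves and uses, cf.\ Lemma~\ref{final:lemma}(\ref{conc:weak}), where the product runs over $l=1,\dots,k$ --- has every factor genuinely shifted. So you should either restate the norm assertion with all factors shifted, or pull $f_0$ out and not replace it by its projection. Once that is done, your reduction closes in the standard way: reduce to the case $E(f_{l_0}\mid\YY)=0$ for some shifted position $l_0\geq 1$, and then the comparison of $E(f_{l_0}T^{l_0h}f_{l_0}\mid\YY)$ with $E(f_{l_0}\mid\YY)T^{l_0h}E(f_{l_0}\mid\YY)=0$, averaged in $h$ via relative weak mixing, is exactly what kills the van der Corput error term.
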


Given a factor $\YY$, write $\langle f, g \rangle_y$ for $E(fg \mid
\YY)(y)$; this provides a ``bundle'' of Hilbert spaces indexed by
elements $y$ of $\XX$ (defined up to almost everywhere equivalence).
A function $f$ in $L^2(\XX)$ is said to be \emph{almost periodic}
relative to $\YY$ if for every $\delta > 0$, there is a finite set of
functions $g_0, \ldots, g_k$ in $L^2(\XX)$ such that $\min_{i \leq k}
\| f - g_i \|_y < \delta$ for almost every $y$ in $\XX$. Another
factor $\ZZ \supseteq \YY$ is said to be a \emph{compact} extension of
$\YY$ if every element of $\ZZ$ is a limit of functions that are
almost periodic relative to $\YY$. The space $Z(\YY)$ spanned by the
functions that are almost periodic relative to $\YY$ is called the
\emph{maximal compact extension of $\YY$}.

Lemma~\ref{zy:lemma}, below, provides another characterization of
$Z(\YY)$. Given $\XX$ and a factor, $\YY$, the \emph{square of $\XX$
 relative to $\YY$}, $\XX \times_\YY \XX$, is defined in
\cite{furstenberg:77, furstenberg:81, furstenberg:katznelson:79,
 furstenberg:et:al:82}.  Here we only need the following
characterization of the Hilbert space $L^2(\XX \times_\YY
\XX)$. Start with formal elements consisting of sums $\sum_{i < n} f_i
\otimes g_i$, where $f_i$ and $g_i$ are elements of
$L^\infty(\XX)$. Define an inner product on these elements by taking
\[
\langle f \otimes g, h \otimes k \rangle_{\YY} = \langle E(fh \mid
\YY), E(g k \mid \YY) \rangle,
\]
where the right-hand side refers to the usual inner product on
$L^2(\XX)$, and extending to finite sums using bilinearity. Then
$L^2(\XX \times_\YY \XX)$ is, up to isomorphism, the completion of
this space under the associated norm. One can show that for any $h$ in
$L^\infty(\YY)$, the elements $h f \otimes g$ and $f \otimes h g$ are
identified by the norm, and so one can view $L^\infty(\YY)$ as a
embedded in $L^2(\XX \times_\YY \XX)$ via the map $h \mapsto h \otimes
1$; in particular, the real numbers are embedded as elements $c
\otimes 1$. The projection of an element $f \otimes g$ on $\YY$ is
then given by 
\[
E(f \otimes g \mid \YY) = E(f \mid \YY) E(g \mid \YY).
\]
The action of $T$ on $L^2(\XX \times_\YY \XX)$ is obtained
by taking $T(f \otimes g) = T f \otimes T g$ and extending it to the
rest of the space.

One can define multiplication by an element $f \otimes g$ by setting
$(f \otimes g) \cdot (h \otimes k) = (fh \otimes g k)$. Integration in
$L^2(\XX \times_\YY \XX)$ is given by 
\[
\int f \; d(\mu \times_\YY \mu) = \langle f, 1 \otimes 1 \rangle.
\]
In particular, if $h$ is in $L^\infty(\YY)$, 
\[
\int h \; d(\mu \times_\YY \mu) = \int h \; d\mu.
\]
There is also a lattice structure on $L^2(\XX \times_\YY \XX)$ derived
from that on $L^2(\XX)$; all we will need below is that if $f$ and $g$
are elements of $L^\infty(\XX)$, then $\| f \otimes g \|_{L^\infty(\XX
  \times_\YY \XX)} \leq \| f \|_{L^\infty(\XX)} \cdot \| g
\|_{L^\infty(\XX)}$.

If $H$ is any element of $L^\infty(\XX \times_\YY \XX)$ of the form
$\sum_{i< n} h_i \otimes g_i$ and $f$ is in $L^2(\XX)$, define
\[
H *_\YY f = \sum_{i < n} E(f h_i \mid \YY) k_i.
\]
The $*_\YY$ operation then extends to arbitrary elements of $L^2(\XX
\times_\YY \XX)$ by taking limits. For any $H$ in $L^\infty(\XX
\times_\YY \XX)$, the operation $f \mapsto H *_\YY f$ is a bounded
linear operator, with $\| H *_\YY f \|_{L^2(\XX)} \leq \| H \|_\infty
\cdot \| f \|_{L^2(\XX)}$ (see, for example, \cite[pages
130--131]{furstenberg:81}).

We will be particularly interested in elements of $L^\infty(\XX
\times_\YY \XX)$ of the form
\[
H_g^n = \frac{1}{n} \sum_{i < n} T^i (g \otimes g),
\]
where $g$ is in $L^\infty(\XX)$. The mean ergodic theorem implies that
the functions $H_g^n$ converge to a limit, $H_g$, in $L^2(\XX
\times_\YY \XX)$. For each $n$, $\| H_g^n \|_\infty$, and hence $\|
H_g \|_\infty$, is bounded by $\| g \|^2_\infty$. One can show,
moreover, that for any fixed $g$, the sequence $(H_g^n *_\YY f)$ has
a rate of convergence that depends only on a bound on $\| f
\|_\infty$. We will make use of this uniformity in
Section~\ref{weak:mixing:section}.

The following fact is established in
\cite{furstenberg:77,furstenberg:katznelson:79,furstenberg:81}, and
implicitly in \cite{furstenberg:et:al:82}:
\begin{lemma}
\label{zy:lemma}
$Z(\YY)$ is the space spanned by the set of elements of the form $H_g
*_\YY f$, as $f$ and $g$ range over $L^\infty(\XX)$.
\end{lemma}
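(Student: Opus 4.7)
The plan is to prove equality by establishing the two inclusions between $Z(\YY)$ and $W := \overline{\mathrm{span}}\{H_g *_\YY f : f, g \in L^\infty(\XX)\}$. The inclusion $W \subseteq Z(\YY)$ is the more substantive direction.

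For $W \subseteq Z(\YY)$, I would begin by noting that $H_g$ is $T$-invariant in $L^2(\XX \times_\YY \XX)$: the finite averages $H_g^n$ satisfy $\|T H_g^n - H_g^n\| = \tfrac{1}{n}\|T^n(g\otimes g) - g\otimes g\| \to 0$, so the limit is fixed by $T$. Verifying $T(H *_\YY f) = (TH) *_\YY (Tf)$ on elementary tensors and extending by bilinearity and continuity shows that the operator $\Phi_g : f \mapsto H_g *_\YY f$ commutes with $T$, so the $T$-orbit of $\Phi_g f$ is exactly $\Phi_g(\{T^j f : j \in \ZZ\})$. To see that $\Phi_g f$ is almost periodic over $\YY$, I would approximate by the finite averages
\[
H_g^n *_\YY (T^j f) = \tfrac{1}{n}\sum_{i<n} E(T^j f \cdot T^i g \mid \YY)\, T^i g,
\]
which lies in the $L^\infty(\YY)$-module generated by $T^0 g, \ldots, T^{n-1} g$ with coefficients bounded, uniformly in $j$, by $\|f\|_\infty \|g\|_\infty$. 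Discretising those coefficients on a finite $\delta$-net in the real interval $[-\|f\|_\infty \|g\|_\infty, \|f\|_\infty\|g\|_\infty]$ produces a finite list of $L^2(\XX)$-functions that $\delta$-approximates every $T^j(H_g^n *_\YY f)$ in the fibrewise norm; choosing $n$ so that $\|H_g - H_g^n\|_\infty$ is sufficiently small (using the uniformity noted just before the lemma statement) transfers the approximation to $T^j \Phi_g f$, establishing $\Phi_g f \in Z(\YY)$.

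For $Z(\YY) \subseteq W$, I would show that each $F$ that is AP over $\YY$ belongs to $W$. Polarisation identities such as $(h+k)\otimes(h+k) - (h-k)\otimes(h-k) = 2(h\otimes k + k \otimes h)$, combined with the fact that the mean ergodic projection onto $T$-invariants sends $h \otimes h$ to $H_h$, imply that every $T$-invariant element of $L^2(\XX \times_\YY \XX)$ lies in $\overline{\mathrm{span}}\{H_g\}$. The classical theory of compact extensions (as in \cite{furstenberg:81}) then exhibits $F$ as $K *_\YY f$ for some $T$-invariant $K \in L^\infty(\XX \times_\YY \XX)$, and approximating $K$ by combinations of $H_g$ in the appropriate norm puts $F$ in $W$.

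The main obstacle is the fibrewise, almost-everywhere approximation needed in the first inclusion: the convergence $H_g^n \to H_g$ is naturally in $L^2(\XX \times_\YY \XX)$, while the definition of almost periodicity over $\YY$ demands approximation in the fibre norms $\|\cdot\|_y$ for a.e.\ $y$. The essential boundedness $\|H_g\|_\infty \le \|g\|_\infty^2$ and the $L^\infty(\YY)$-bound on the coefficients $E(T^j f \cdot T^i g \mid \YY)$ are what make the passage possible, but the combination of a finite $\delta$-net with the truncation of the Cesàro average is the delicate bookkeeping step.
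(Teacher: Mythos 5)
You should first note the point of comparison: the paper does not prove Lemma~\ref{zy:lemma} at all; it is quoted as a known fact from \cite{furstenberg:77,furstenberg:katznelson:79,furstenberg:81} (and implicitly \cite{furstenberg:et:al:82}), so your attempt has to stand against the classical proofs. In the inclusion $W \subseteq Z(\YY)$ your argument has a genuine gap at exactly the point you flag as ``delicate bookkeeping.'' The step ``choosing $n$ so that $\|H_g - H_g^n\|_\infty$ is sufficiently small'' is not available: the mean ergodic theorem gives $H_g^n \to H_g$ only in the norm of $L^2(\XX \times_\YY \XX)$, and the uniformity remark preceding the lemma in the paper concerns convergence of $H_g^n *_\YY f$ in $L^2(\XX)$ with rate depending only on $\|f\|_\infty$; neither gives $L^\infty$ or fibrewise control. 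Consequently the error $\|(H_g - H_g^n)*_\YY T^j f\|_y$ is controlled only in mean over $y$ (via the fibrewise Cauchy--Schwarz bound through the fibre Hilbert--Schmidt norm of $H_g - H_g^n$, or Chebyshev), i.e.\ it is small off a set of $y$ of small but positive measure, not for almost every $y$ as the definition of almost periodicity demands. This is not a bookkeeping issue: it is precisely why Furstenberg--Katznelson--Ornstein work with a weakened notion of almost periodicity that tolerates an exceptional set of small measure, and why Furstenberg's treatment passes through invariant truncations and an exhaustion argument. Nor can you sidestep it by saying that $Z(\YY)$ is the \emph{closed} span of AP functions and that $L^2$-approximation therefore suffices, because your approximants $H_g^n *_\YY f$ are not themselves obviously AP: $H_g^n$ is not $T$-invariant, so $T^m(H_g^n *_\YY f)$ involves $T^{m}g,\dots,T^{m+n-1}g$ and escapes the fixed finite module generated by $g, Tg, \ldots, T^{n-1}g$ on which your net argument relies.

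The inclusion $Z(\YY) \subseteq W$ is also not established. First, the polarisation identity only produces symmetrised tensors $h \otimes k + k \otimes h$, so after applying the ergodic projection you obtain that invariant elements of the \emph{symmetric} part of $L^2(\XX \times_\YY \XX)$ lie in $\overline{\mathrm{span}}\{H_h\}$; invariant elements with a nontrivial antisymmetric part are not covered, so the claim ``every $T$-invariant element lies in $\overline{\mathrm{span}}\{H_g\}$'' does not follow as stated. Second, and more seriously, the assertion that ``the classical theory of compact extensions exhibits $F$ as $K *_\YY f$ for some $T$-invariant $K$'' is the substantive content of this direction of the lemma; invoking it is essentially invoking the result being proved (it is exactly what the paper's citations supply), so your write-up defers rather than proves the hard half. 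To make this a self-contained proof you would need the actual argument of \cite{furstenberg:77} or \cite{furstenberg:et:al:82}: given an AP function $F$, one works with the invariant kernel obtained from $F \otimes F$ (or with the finite approximating families from the AP condition) and shows directly that $F$ is an $L^2$-limit of elements $H_g *_\YY f$, handling the fibrewise/exceptional-set issues described above.
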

Moreover, if $\XX$ is not weak mixing relative to $\YY$, then then
there are elements $H_g *_\YY f$ not in $\YY$. Hence:
\begin{lemma}
\label{not:weak:mixing:lemma}
If $\XX$ is not weak mixing relative to $\YY$, then $Z(\YY) \supsetneq
\YY$.
\end{lemma}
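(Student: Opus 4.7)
The plan is to produce, under failure of relative weak mixing, a specific element $H_g *_\YY f$ that lies in $Z(\YY)$ (by Lemma~\ref{zy:lemma}) but not in $\YY$. Since we always have $\YY \subseteq Z(\YY)$, this gives the strict inclusion.

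First I would note that for any $f \in L^\infty(\XX)$, writing $\tilde f = f - E(f\mid\YY)$ gives $E(\tilde f\mid\YY)=0$ and
\[
E(\tilde f \cdot T^i g \mid \YY) = E(fT^ig\mid\YY) - E(f\mid\YY)E(T^ig\mid\YY),
\]
so the expression appearing in the definition of relative weak mixing is unchanged. Thus one may assume that the witnesses $f,g$ to the failure of weak mixing satisfy $E(f\mid\YY)=0$, and the averaged quantity simplifies to $\frac{1}{n}\sum_{i<n}\int [E(fT^ig\mid\YY)]^2\,d\mu$.

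The key computation is that this average equals $\langle H_g^n *_\YY f, f\rangle_{L^2(\XX)}$. Unpacking the definitions,
\[
\langle H_g^n *_\YY f, f\rangle = \frac{1}{n}\sum_{i<n}\int E(fT^ig\mid\YY)\cdot T^i g\cdot f\,d\mu,
\]
and pulling the $\YY$-measurable factor $E(fT^ig\mid\YY)$ inside the conditional expectation of the remaining product yields $\frac{1}{n}\sum_{i<n}\int [E(fT^ig\mid\YY)]^2\,d\mu$. Since $H_g^n \to H_g$ in $L^2(\XX\times_\YY\XX)$ and the map $H\mapsto H*_\YY f$ is bounded linear, $H_g^n *_\YY f \to H_g *_\YY f$ in $L^2(\XX)$, so taking $n\to\infty$ gives
\[
\langle H_g *_\YY f, f\rangle = \lim_{n\to\infty}\frac{1}{n}\sum_{i<n}\int [E(fT^ig\mid\YY)]^2\,d\mu.
\]
In particular the weak-mixing limit always exists, and if $\XX$ is not weak mixing relative to $\YY$ then we can choose $f,g$ (with $E(f\mid\YY)=0$) making this limit strictly positive.

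To finish, I would use the following orthogonality: if $h\in\YY$ and $E(f\mid\YY)=0$, then $\langle h,f\rangle = \int h\cdot E(f\mid\YY)\,d\mu = 0$. So if we had $H_g *_\YY f \in \YY$, the inner product $\langle H_g *_\YY f, f\rangle$ would vanish, contradicting its being strictly positive. Hence $H_g *_\YY f$ is an element of $Z(\YY)$ that does not lie in $\YY$, proving $Z(\YY)\supsetneq\YY$. There is no real obstacle here beyond the identification of the weak-mixing average with the inner product $\langle H_g^n *_\YY f,f\rangle$; once that is in place, the rest is immediate from properties of conditional expectation and the $L^2$ convergence $H_g^n\to H_g$.
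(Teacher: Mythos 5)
Your proof is correct and follows the route the paper itself gestures at: exhibit an element $H_g *_\YY f$ lying in $Z(\YY)$ by Lemma~\ref{zy:lemma} but not in $\YY$. The paper offers no argument for the existence of such an element (it simply asserts it, leaving the verification to the Furstenberg/Furstenberg--Katznelson references), and your identification of the weak-mixing averages with $\langle H_g^n *_\YY \tilde f, \tilde f\rangle$ for $\tilde f = f - E(f \mid \YY)$, together with the convergence $H_g^n *_\YY \tilde f \to H_g *_\YY \tilde f$ (already noted in Section~\ref{preliminaries:section}) and the orthogonality of $\tilde f$ to $\YY$, supplies exactly the standard argument being cited, with all steps checking out.
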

Now define $\YY_0$ to be the trivial factor, consisting of the
constant functions. By transfinite recursion, define $\YY_{\alpha + 1}
= Z(\YY_\alpha)$ for every $\alpha$, and define $\YY_{\lambda}$ to be
the factor spanned by $\bigcup_{\gamma < \lambda} \YY_\gamma$ for
every limit ordinal $\lambda$. Since $L^2(\XX)$ is separable, we have
$\YY_{\alpha + 1} = Z(\YY_\alpha) = \YY_\alpha$ at some countable
ordinal $\alpha$. By Lemma~\ref{not:weak:mixing:lemma}, $\XX$ is weak
mixing relative to $\YY$. $\YY = \YY_\alpha$ is called the
\emph{maximal distal factor}.

\begin{definition}
 Say that the factor $\YY$ is \emph{SZ} if for every $k$ and $A$ in
 $\YY$ with $\mu(A) > 0$,
\[
\liminf_{n \to \infty} \frac{1}{n}\sum_{i < n} \mu(\bigcap_{l < k}
T^{-il} A) > 0.
\]
\end{definition}
In particular, Szemer\'edi's theorem follows from the statement
``$\XX$ is SZ.'' In \cite{furstenberg:et:al:82}, this is proved as
follows:
\begin{itemize}
\item The trivial factor is SZ.
\item If a factor $\ZZ$ is SZ, so is $Z(\ZZ)$.
\item If each of a sequence $\ZZ_0, \ZZ_1, \ZZ_2, \ldots$ of factors
 is SZ, then so is the factor spanned by $\bigcup_i \ZZ_i$.
\item If a factor $\ZZ$ is SZ, and $\XX$ is weak mixing relative to
 $\ZZ$, then $\XX$ is SZ.
\end{itemize}
The first three clauses imply that the maximal distal factor, $\YY$,
is SZ. The last implies that $\XX$ is SZ, as required.

\section{Main results}
\label{results:section}

The set of countable ordinals can be given a quick inductive
definition: $0$ is a countable ordinal; if $\alpha$ is a countable
ordinal, then so is $\alpha + 1$; and if $\alpha_0, \alpha_1,
\alpha_2, \ldots$ is an increasing sequence of countable ordinals, so
is their least upper bound, which we will denote $\sup_n
\alpha_n$. Addition, multiplication, and exponentiation can be defined
recursively (see, for example, \cite{kunen:80}), and $\omega$ is
defined to be $\sup_n n$.

It is common to identify each ordinal $\alpha$ with the set $\{ \beta
\; | \; \beta < \alpha \}$ of ordinals less than it. The ordinals
serve as representatives of the order types of well-founded orderings,
which is to say, if $(X, \prec)$ is any well-founded ordering, then
$(X, \prec)$ is isomorphic to $(\alpha, <)$ for some ordinal $\alpha$.
The arithmetic operations then have natural combinatorial
interpretations.  The ordinal $\omega$ represents the order type of
the natural numbers, and $\alpha + 1$ represents the order type
obtained by appending a single element to an ordering of type
$\alpha$. The ordinal $\alpha + \beta$ represents an ordering of type
$\alpha$ followed by an order of type $\beta$. The ordinal $\alpha
\cdot \beta$ represents $\beta$ copies of an order of type $\alpha$,
that is, the order type of $\beta \times \alpha$ under lexicographic
order. The interpretation of the ordinal $\alpha^\beta$ is slightly
more complicated: it represents the set of functions from $\beta$ to
$\alpha$ that are nonzero at only finitely many arguments, where the
order is obtained by comparing the values at the largest input where
they differ. Of course, for natural numbers $n$, $\alpha^n$ can be
identified with the $n$-fold product of $\alpha$ with itself. Many
familiar properties of addition, multiplication, and exponentiation on
the natural numbers hold for the extensions to the ordinals, but not
all. For example, addition and multiplication are associative but not
commutative, since $1 + \omega = \omega$ and $2 \cdot \omega =
\omega$.

Our main theorem is that an approximation to the first property of the
maximal distal factor given in Lemma~\ref{all:orders:lemma} holds
fairly low down in the Furstenberg-Zimmer tower.

\begin{theorem}
\label{main:theorem}
 For every $k$, all functions $f_0, \ldots, f_{k-1}$ in
 $L^\infty(\XX)$, and every $\varepsilon > 0$, there are $n$ and
 $\alpha < \omega^{\omega^\omega}$ such that for every $m \geq n$,
\[
\frac{1}{m}\sum_{i<m} \int \left(E(\prod_{l<k} T^{li} f_l \mid
 \YY_\alpha) - \prod_{l<k} T^{li} E(f_l \mid \YY_\alpha) \right)^2
d\mu <
\varepsilon.
\]
\end{theorem}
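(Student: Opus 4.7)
My plan is to prove the theorem by induction on $k$, treating it as a metastable analogue of the first statement of Lemma~\ref{all:orders:lemma}. The base cases $k=0,1$ are trivial at $\alpha=0$, since the two products in the integrand agree. For the inductive step, I would rewrite the integrand using a telescoping decomposition
\[
\prod_{l<k} T^{li} f_l - \prod_{l<k} T^{li} E(f_l \mid \YY_\alpha)
= \sum_{j<k} \Bigl(\prod_{l<j} T^{li} E(f_l \mid \YY_\alpha)\Bigr)(T^{ji}(f_j - E(f_j\mid\YY_\alpha)))\Bigl(\prod_{j<l<k} T^{li} f_l\Bigr),
\]
apply $E(\,\cdot\,|\YY_\alpha)$, and then analyze each summand. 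The task then reduces to bounding the $L^2$ norm of Cesaro averages of terms involving one orthogonal-to-$\YY_\alpha$ factor, for which I would invoke relative weak mixing in a metastable form, combined with the $L^2$ form already provided by Lemma~\ref{all:orders:lemma} (applied in the induction hypothesis to $k-1$ factors in $\XX\times_\YY\XX$).

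Concretely, for each summand I would pass to $\XX\times_{\YY_\alpha}\XX$ and recognize the relevant averaged squared integrand as a $*_{\YY_\alpha}$-pairing against an ergodic average $H_{g}^n$ from the preliminaries. By the mean ergodic theorem the $H_g^n$ converge to $H_g$, and by Lemma~\ref{zy:lemma} the output of $H_g *_{\YY_\alpha}(\,\cdot\,)$ lies in $\YY_{\alpha+1}$. So if the averages in question are not yet within $\varepsilon$ at level $\YY_\alpha$, a non-negligible part of the relevant function moves into $\YY_{\alpha+1}$. The crucial input here is the Kohlenbach--Leu\c{s}tean metastable mean ergodic theorem \cite{kohlenbach:leustean:unp}, which gives a uniform quantitative rate for the convergence $H_g^n\to H_g$ that depends only on the $L^\infty$ norms of $f$ and $g$; this lets me replace the purely qualitative limiting statement of Lemma~\ref{all:orders:lemma} with an effective metastable one.

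The ordinal bookkeeping is where I expect the main work to lie. A single application of the metastable ergodic theorem, combined with the compact extension step $\YY_\alpha\mapsto\YY_{\alpha+1}=Z(\YY_\alpha)$, will let me "consume" at most $\omega^\omega$ many levels to improve one summand in the telescoping sum by $\varepsilon/k$; this is essentially the same bookkeeping that gives the $\omega^\omega$ bound in the metastable mean ergodic theorem. The induction on $k$ then iterates this, so the ordinal grows like $\omega^\omega\cdot\omega^\omega\cdot\ldots$ a finite number of times, which will be absorbed into $\omega^{\omega\cdot k}<\omega^{\omega^\omega}$; uniformity in $k$ is obtained because $k$ is fixed in the statement.

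The main obstacle, as I anticipate it, is the bookkeeping that interleaves (i) the induction on $k$, (ii) iterated compact extensions needed so that the almost-periodic components of the various telescoping hybrid functions all lie below $\YY_\alpha$ simultaneously, and (iii) the metastability rate of the mean ergodic theorem applied in $\XX\times_{\YY_\alpha}\XX$ with $\alpha$ itself varying. The delicate point is to ensure that when one enlarges $\alpha$ to absorb errors from one summand, the estimates obtained for earlier summands remain valid, so that the whole construction produces a single $\alpha<\omega^{\omega^\omega}$ working uniformly for all $k$ telescoping terms and a single threshold $n$.
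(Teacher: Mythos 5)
Your outline assembles the right raw ingredients -- the telescoping decomposition, the operators $H_g^n *_{\YY_\delta}$ and Lemma~\ref{zy:lemma}, the Kohlenbach--Leu\c{s}tean metastable mean ergodic theorem, relative squares, and an induction on $k$ -- and in that sense it shadows the paper's strategy. But there is a genuine gap, and it sits exactly at the point you flag as ``the main obstacle'' without resolving it: the scheme of fixing a level $\alpha$, improving one telescoping summand, and then ``enlarging $\alpha$ to absorb errors'' does not work as stated. When $\alpha$ is increased, the projections $E(f_l\mid\YY_\alpha)$ change, hence so do the functions $f_l - E(f_l\mid\YY_\alpha)$ appearing in the earlier summands, the elements $A_n(f\otimes f)$ of $L^2(\XX\times_{\YY_\alpha}\XX)$, and the thresholds $n$ supplied by the metastable ergodic theorem; nothing in your sketch shows that estimates obtained at the old level persist at the new one, and in general they do not. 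The weak-mixing step also needs more than ``part of the function moves into $\YY_{\alpha+1}$'': one must locate a level $\delta$ at which simultaneously (i) the ergodic averages $H^m_g$ have stabilized in $L^2(\XX\times_{\YY_\delta}\XX)$ with a threshold $n$ valid for all $m\geq n$, and (ii) $E(f\mid\YY_{\delta+1})-E(f\mid\YY_\delta)$ is small, so that the orthogonality of $f-E(f\mid\YY_{\delta+1})$ to $H_g*_{\YY_\delta}(f-E(f\mid\YY_\delta))\in\YY_{\delta+1}$ can be exploited; and later these simultaneities must be arranged for several functions and several relative squares $\XX^{[r]}_\delta$ at the same $\delta$.

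The missing idea is the mechanism the paper introduces precisely to avoid re-enlarging $\alpha$: prove a stronger, ``density'' form of each metastable statement -- that along \emph{every} sufficiently long $\alpha^\theta$-sequence of levels there are $n$ and an $\alpha$-subsequence $t$ such that the property holds for $t$-many $\delta$ -- and then combine finitely or countably many such properties by repeatedly thinning sequences (the paper's Lemmas~\ref{tech:pair}, \ref{tech:rel}, \ref{tech:nested}). The engine behind the base case is a counting argument (Theorem~\ref{second:lemma}): the quantity $\inf_{j\leq i}\|A_j(f\otimes f)\|_\delta$ is bounded by $B$ and can drop by $\gamma$ at most $\lceil B/\gamma\rceil$ times, so a pigeonhole over nested blocks of an $\alpha^K$-sequence yields a single $n$ good for ``many'' $\delta$ at once; the Kohlenbach--Leu\c{s}tean input gives $\gamma$ and a finite exponent $K$ per application, not an $\omega^\omega$ cost per step as you suggest (the exponents only grow, to ordinals like $k^{2k}<\omega^\omega$, through the iterated combinations in the induction, which is also why the induction must be carried out uniformly over all iterated squares $\XX^{[r]}_\delta$ and for formal simple elements that make sense at every $\delta$ simultaneously). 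Without this ``many $\delta$ along arbitrary sequences'' framework, or some substitute for it, the interleaving you describe cannot be closed into a proof producing a single $\alpha<\omega^{\omega^\omega}$ and a single $n$.
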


In fact, our Lemma~\ref{final:lemma} proves something stronger, namely that given $f_0,
\ldots, f_{k-1}$ and $\varepsilon > 0$ there is an $n$ with
``many'' such $\alpha < \omega^{\omega^\omega}$, in an appropriate
combinatorial sense. We obtain the following as a consequence of this stronger fact:

\begin{corollary}
\label{main:corollary}
 For every $k$, all functions $f_0, \ldots, f_{k-1}$ in
 $L^\infty(\XX)$, and every $\varepsilon > 0$, there are $n$ and
 $\alpha < \omega^{\omega^\omega}$ such that for every $m \geq n$,
\[
\left\|
 \frac{1}{m}\sum_{i<m}
 \left(\prod_{l<k} T^{ln} f_l - \prod_{l<k} T^{ln} E(f_l \mid
   \YY_\alpha) \right) \right\|_{L^2(\XX)} < \varepsilon.
\]
\end{corollary}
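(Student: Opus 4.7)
The plan is to derive Corollary~\ref{main:corollary} from the stronger form of Theorem~\ref{main:theorem} referred to in the remark above---namely Lemma~\ref{final:lemma}, which supplies ``many'' suitable $\alpha$ simultaneously. Writing $F_i = \prod_{l<k} T^{li} f_l$ and $\tilde F_i = \prod_{l<k} T^{li} E(f_l\mid\YY_\alpha)$, and letting $P_\alpha$ denote the orthogonal projection onto $\YY_\alpha$, I would split
\[
F_i - \tilde F_i = (F_i - P_\alpha F_i) + (P_\alpha F_i - \tilde F_i)
\]
and bound the $L^2$ norm of each averaged piece separately. The second piece lies in $\YY_\alpha$, and the convexity inequality $\|\frac{1}{m}\sum v_i\|^2 \leq \frac{1}{m}\sum \|v_i\|^2$ gives
\[
\left\|\tfrac{1}{m}\sum_{i<m}(P_\alpha F_i - \tilde F_i)\right\|_{L^2}^2 \;\leq\; \tfrac{1}{m}\sum_{i<m}\int (P_\alpha F_i - \tilde F_i)^2\,d\mu,
\]
which is exactly the quantity bounded by $\varepsilon$ in Theorem~\ref{main:theorem}.

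The main work is then to show that $u_i := F_i - P_\alpha F_i$ has small $L^2$ average. Since $u_i\perp\YY_\alpha$, a direct computation (using $\int g \, P_\alpha h\, d\mu = \int P_\alpha g \, P_\alpha h\, d\mu$) gives
\[
\left\|\tfrac{1}{m}\sum_{i<m} u_i\right\|_{L^2}^2 \;=\; \tfrac{1}{m^2}\sum_{i,j<m}\int \bigl[P_\alpha(F_iF_j) - P_\alpha F_i \cdot P_\alpha F_j\bigr]\,d\mu.
\]
The van der Corput lemma reduces the required bound to estimating the averages $\tfrac{1}{m}\sum_i\langle u_i,u_{i+h}\rangle$ for $h$ in a suitable finite range. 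Reindexing $j=i+h$, the product $F_iF_{i+h}$ is itself a $k$-fold product $\prod_{l<k}T^{li}g_l^h$ of shifts of the ``doubled'' functions $g_l^h := f_l\cdot T^{lh}f_l$ (with $g_0^h := f_0^2$), so Theorem~\ref{main:theorem} applied to the family $(g_l^h)_{l<k}$ approximates $P_\alpha(F_iF_{i+h})$ on average by $\prod_{l<k}T^{li}E(g_l^h\mid\YY_\alpha)$. Similarly, applying Theorem~\ref{main:theorem} to the original family $(f_l)_{l<k}$, together with boundedness of the factors, shows that the average of $\int P_\alpha F_i\cdot P_\alpha F_{i+h}\,d\mu$ differs by $O(\sqrt{\varepsilon})$ from the average of $\int \tilde F_i\tilde F_{i+h}\,d\mu = \int \prod_{l<k}T^{li}\tilde g_l^h\,d\mu$, where $\tilde g_l^h := E(f_l\mid\YY_\alpha)\cdot T^{lh}E(f_l\mid\YY_\alpha)$. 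The remaining discrepancy reduces to comparing $E(g_l^h\mid\YY_\alpha)$ with $\tilde g_l^h$, which is precisely the $k=2$ instance of Theorem~\ref{main:theorem} applied to the pair $(f_l,f_l)$.

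The main obstacle---and the reason the stronger Lemma~\ref{final:lemma} is needed---is that Theorem~\ref{main:theorem} supplies an $\alpha$ depending on the family to which it is applied, whereas the argument just sketched requires a single $\alpha$ that simultaneously handles the original family $(f_l)_{l<k}$, each of the doubled families $(g_l^h)_{l<k}$ over the van der Corput range of $h$, and the pair families $(f_l,f_l)$ for each $l$. The ``many $\alpha$'' content of Lemma~\ref{final:lemma} is precisely what allows a pigeonhole argument to produce a single $\alpha<\omega^{\omega^\omega}$ at which all of these finitely many approximations hold simultaneously, yielding the desired bound on the $L^2$ norm.
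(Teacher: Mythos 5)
Your analytic skeleton (splitting off $F_i-P_\alpha F_i$, van der Corput, the doubled functions $g_l^h=f_lT^{lh}f_l$, reduction to a $k=2$ correlation bound) is exactly the FKO-style argument the paper runs inside its induction in Lemma~\ref{wmstrong}, and the convexity bound for the piece $P_\alpha F_i-\tilde F_i$ is fine. The genuine gap is the final step, where you claim ``a pigeonhole argument'' over ``finitely many approximations'' yields a single $\alpha$. The finite collection of families you must control at level $\alpha$ is not fixed in advance: the van der Corput cutoff $H$ must be at least the threshold $n$ coming from the $k=2$ instance for the pairs $(f_l,f_l)$, and in Theorem~\ref{main:theorem} (or Lemma~\ref{final:lemma}) that threshold is produced together with, and depends on, the level at which the instance holds. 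Only after $H$ is fixed do you know which doubled families $(g_l^h)$, $|h|<H$, need the theorem at that \emph{same} $\alpha$; and all conditional expectations in your chain of approximations must sit at a single level, so you cannot use one $\alpha$ for the $k=2$ step and another for the doubled families. This nested dependence is precisely what the paper's Lemma~\ref{tech:nested} exists to handle, and even simultaneity for a \emph{fixed} finite collection is not a literal pigeonhole: the ``many $\delta$'' guarantees from separate applications of Lemma~\ref{final:lemma} are relative to a chosen $\alpha^\theta$-sequence and need not overlap unless one thins sequences sequentially as in Lemma~\ref{tech:pair}. This is also why the ordinal exponents compound with $k$; your sketch gives no argument that the iterated choices stay below $\omega^{\omega^\omega}$. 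With the full sequence-based form of Lemma~\ref{final:lemma} one could make your outline rigorous, but doing so amounts to reproving Lemma~\ref{wmstrong} together with its technical lemmas, not to a pigeonhole.

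Two further points. First, the $k=2$ statement you need in the correlation step concerns the shifts $T^{lh}$, i.e.\ the transformation $T^{l}$ rather than $T$ (compare the first displayed property in the proof of Lemma~\ref{wmstrong}); it is a variant of Lemma~\ref{wmdim}, not literally an instance of Theorem~\ref{main:theorem}. Second, the paper does not derive the corollary by post-processing Theorem~\ref{main:theorem} at all: part (2) of Lemma~\ref{final:lemma} already \emph{is} the $L^2$-norm statement, and both the theorem and the corollary follow by taking $s$ to be the identity sequence $s_\beta=\beta$. The van der Corput work you propose has already been carried out once, inside the induction, where the sequence machinery legitimizes the simultaneous choice of level, threshold, and $H$.
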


We emphasize that although Theorem~\ref{main:theorem} is new,
Corollary~\ref{main:corollary} is not: using an altogether different
argument, Furstenberg~\cite{furstenberg:77} showed that for each $k$,
$\YY_k$ is characteristic for the averages with $k$-fold products. Our
methods are quite general, however, and work in other
situations involving transfinite constructions of factors; see \cite{towsner:draft}. Moreover,
our argument provides some insight into the role of the maximal distal
factor in the Furstenberg-Katznelson argument, providing a general
explanation as to why the full strength of the construction is not
needed to obtain the combinatorial result.

It is worth noting that for $k = 2$, Theorem~\ref{main:theorem}
describes a weaker version of relative weak mixing. In that case, the
discussion at the end of Section~\ref{weak:mixing:section} shows that the
theorem holds with $\omega$ in place of $\omega^{\omega^\omega}$. It is not hard
show that here $\omega$ cannot be replaced by any finite ordinal $K$. Otherwise,
fixing $f_0 = f_1 = f$, we would have that for every $\varepsilon > 0$ there is an $\alpha < K$ such 
that the conclusion of the theorem holds. By the pigeonhole principle, this would imply 
that there is a single $\alpha < K$ that works for every $\varepsilon$, which 
is to say, $f$ is weak mixing relative to $\YY_\alpha$. But, by the results of Beleznay
and Foreman \cite{beleznay:foreman:96}, there are measure preserving systems with 
functions $f$ that are not weak mixing relative to any finite level of the Furstenberg-Zimmer 
hierarchy. So, for such functions, the least $\alpha$ satisfying the conclusion 
of Theorem~\ref{main:theorem} must approach $\omega$ as $\varepsilon$ approaches $0$.
Our proof gives an explicit bound on $\alpha$ depending on $k$ and $\varepsilon$; we do not 
know the extent to which that bound is sharp. 

For $k > 2$, the statement of Lemma~\ref{final:lemma} gives slightly more information, 
in terms of a bound less than $\omega^{\omega^\omega}$ depending on $k$. But, once again, we do not know the extent to which this bound is sharp, nor even that a bound of $\omega$ itself is insufficient.

Note that our corollary is even weaker than saying that some
$\YY_\alpha$, with $\alpha < \omega^{\omega^\omega}$, is characteristic for the
limit in question. But, as we now show, once we know that $\YY_\alpha$
is SZ for each $\alpha $ less than or equal to $\omega^{\omega^\omega}$, this
strictly weaker property is sufficient to obtain Szemer\'edi's
theorem. In fact, the proof is only a slight modification of the usual
Furstenberg-Katznelson argument, e.g.~\cite[Theorem
8.3]{furstenberg:et:al:82}.

\begin{theorem}
$\XX$ is SZ.
\end{theorem}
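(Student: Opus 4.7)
My plan is to adapt the standard Furstenberg--Katznelson transfer argument, using Corollary~\ref{main:corollary} in place of the relative weak mixing hypothesis normally needed to pull SZ down from a factor to $\XX$. Fix $A\in\XX$ with $\mu(A)=\delta>0$, set $f=\chi_A$, and write $g_\alpha := E(f\mid\YY_\alpha)$. For a parameter $\varepsilon>0$ to be fixed later, Corollary~\ref{main:corollary} applied to the tuple $(f,\ldots,f)$ produces some $\alpha<\omega^{\omega^\omega}$ and $n$ such that
\[
\Big\|\,\frac{1}{m}\sum_{i<m}\Big(\prod_{l<k}T^{li}f-\prod_{l<k}T^{li}g_\alpha\Big)\Big\|_{L^2(\XX)}<\varepsilon
\]
for every $m\geq n$; Cauchy--Schwarz against the constant function $1$ converts this into the corresponding $L^1$ bound on integrals.

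Since $g_\alpha$ is $\YY_\alpha$-measurable with $\int g_\alpha\,d\mu=\delta$ and values in $[0,1]$, the level set $B_\alpha:=\{g_\alpha>\delta/2\}$ lies in $\YY_\alpha$ with $\mu(B_\alpha)\geq\delta/2$. The hypothesis that $\YY_\alpha$ is SZ then supplies $c_\alpha>0$ with $\frac{1}{m}\sum_i\mu(\bigcap_l T^{-li}B_\alpha)\geq c_\alpha/2$ for all sufficiently large $m$; pointwise on $\bigcap_l T^{-li}B_\alpha$ we have $\prod_l g_\alpha(T^{li}x)\geq(\delta/2)^k$, so $\frac{1}{m}\sum_i\int\prod_{l<k} T^{li}g_\alpha\,d\mu\geq(\delta/2)^k c_\alpha/2$. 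Combining with the Corollary bound yields
\[
\frac{1}{m}\sum_{i<m}\mu\Big(\bigcap_{l<k}T^{-li}A\Big)\geq(\delta/2)^k c_\alpha/2-\varepsilon
\]
for all large enough $m$, and it remains only to choose $\varepsilon$ so that the right-hand side is strictly positive.

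The main obstacle is precisely this last step: to make the bound positive one needs $\varepsilon<(\delta/2)^k c_\alpha/2$, yet $c_\alpha$ depends on the ordinal $\alpha$ returned by the Corollary as a function of $\varepsilon$. The cleanest way I see to break the circle is to observe that the transfinite induction establishing SZ via the first three bullets of Section~\ref{preliminaries:section} in fact yields \emph{uniform} SZ constants depending only on $k$ and the measure of the target set; so $c_\alpha\geq c(k,\delta/2)$ holds for every $\alpha\leq\omega^{\omega^\omega}$, and one may take $\varepsilon=(\delta/2)^k c(k,\delta/2)/4$ to obtain the unconditional lower bound $(\delta/2)^k c(k,\delta/2)/4>0$ on the liminf. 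Should one prefer not to invoke this uniformity, an alternative is to use the ``many $\alpha$'' strengthening of Corollary~\ref{main:corollary} given by Lemma~\ref{final:lemma}: pick $\alpha$ cofinally close to $\omega^{\omega^\omega}$, replace $g_\alpha$ by $E(f\mid\YY_{\omega^{\omega^\omega}})$ at negligible $L^2$ cost via a product-telescoping estimate, and then work with the single SZ constant attached to the fixed factor $\YY_{\omega^{\omega^\omega}}$.
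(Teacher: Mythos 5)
You set up the transfer correctly, and you have put your finger on exactly the right obstacle: the circularity between the accuracy $\varepsilon$ fed into Corollary~\ref{main:corollary} and the SZ constant of the factor $\YY_\alpha$ it returns. But neither of your two proposed escapes closes the gap. The uniformity claimed in your first fix --- SZ constants depending only on $k$ and the measure of the set, uniformly over all $\alpha\leq\omega^{\omega^\omega}$ --- is not stated, proved, or used anywhere in the paper; the itemized SZ-preservation facts quoted in Section~\ref{preliminaries:section} are purely qualitative, and a uniform lower bound of this kind is essentially a quantitative Szemer\'edi-type statement, so it cannot simply be ``observed'' without a new argument (indeed, if it were freely available, the delicate bookkeeping in the paper's own proof would be unnecessary). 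Your second fix conflates ordinal proximity with $L^2$ proximity: choosing $\alpha$ ``cofinally close to $\omega^{\omega^\omega}$'' gives no control whatsoever on $\|E(f\mid\YY_\alpha)-E(f\mid\YY_{\omega^{\omega^\omega}})\|$. What is actually needed is that $\alpha$ exceed a threshold $\beta_0$ (depending on $f$ and on the accuracy, hence on the SZ constant of the fixed limit factor) beyond which the increasing martingale $E(f\mid\YY_\beta)$ has essentially converged, together with an argument that Lemma~\ref{final:lemma} can produce a good $\alpha$ above any prescribed $\beta_0$ --- which it can, by running it on a sequence $s$ all of whose terms exceed $\beta_0$, but you never take this step, and the Corollary as stated gives no lower-bound control on $\alpha$ at all.

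The paper resolves the circularity differently and more economically. It applies the Corollary with accuracy $1/j$ for \emph{every} $j$, obtaining ordinals $\alpha_j<\omega^{\omega^\omega}$, and sets $\alpha=\sup_j\alpha_j\leq\omega^{\omega^\omega}$. The SZ constant $\delta$ is then attached to the level set $B=\{E(\chi_A\mid\YY_\alpha)\geq\mu(A)/2\}$ of this single fixed factor, and --- because $\YY_\alpha$ is by construction the factor spanned by the $\YY_{\alpha_j}$ --- one can afterwards choose one large $j$ so that simultaneously $\mu(B\setminus B_j)<\delta/(2k)$, where $B_j=\{x\in B: E(\chi_A\mid\YY_{\alpha_j})>\mu(A)/4\}$, and $1/j$ is below half the resulting lower bound $\eta$. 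The $L^2$-closeness of $E(\chi_A\mid\YY_{\alpha_j})$ to $E(\chi_A\mid\YY_\alpha)$ is automatic here precisely because the $\alpha_j$ were collected \emph{before} the SZ constant was invoked, so no uniformity in $\alpha$ and no comparison with $\YY_{\omega^{\omega^\omega}}$ is needed. Your argument becomes correct if you replace either fix with this supremum construction, or if you flesh out your second route by first fixing the limit factor, extracting the convergence threshold $\beta_0$, and only then invoking Lemma~\ref{final:lemma} on a sequence starting above $\beta_0$.
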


\begin{proof}
 Suppose we are given a set $A$ in $\mathcal B$ such that $\mu(A) >
 0$. Since
\[
\frac{1}{n}\sum_{i < n} \mu(\bigcap_{l = 0}^k
T^{-il} A) =
\frac{1}{n}\sum_{i < n} \int \prod_{l < k}
T^{il} \chi_A d\mu,
\]
 our goal is to show that there is a $\delta$ such that the
 right-hand side is greater than $\delta$ for sufficiently large
 $n$.

 For each $j$, let $\alpha_j$ be the least ordinal such that for
 sufficiently large $n$,
\[
\left\| \frac{1}{n}\sum_{i<n} \left(\prod_{l=0}^k T^{il} \chi_A -
   \prod_{l=0}^k T^{il} E(\chi_A \mid \YY_{\alpha_j}) \right)
\right\|_{L^2(\XX)} < 1 / j.
\]
Set $\alpha = \sup \alpha_j \leq \omega^{\omega^\omega}$, so that
$\YY_\alpha$ is the
factor spanned by $\bigcup_j \YY_{\alpha_j}$.

Since $\chi_A$ is nonnegative, so is $E(\chi_A \mid \YY_\alpha)$. Let
\[
B = \{x\mid E(\chi_A\mid\YY_\alpha)(x)\geq\mu(A)/2\}.
\]
 Since
\[
\mu(A)=\int_B E(\chi_A\mid\YY_\alpha)
d\mu+\int_{\overline{B}}E(\chi_A\mid\YY_\alpha)d\mu\leq\mu(B)+\mu(A)/2,
\]
it follows that $\mu(B) \geq \mu(A)/2$. Since $\YY_\alpha$ is SZ,
there is a $\delta$ such that
\[
\frac{1}{n}\sum_{i < n} \mu(\bigcap_{l = 0}^k
T^{-il} B) > \delta
\]
whenever $n$ is sufficiently large.

For each $j$, set
\[
B_j = \{ x \in B \mid E(\chi_A \mid \YY_{\alpha_j})(x) > \mu(A) / 4 \}.
\]
Since $\YY_\alpha$ is the limit of the factors $\YY_{\alpha_j}$, we
can make $\mu(B - B_j)$ as small as we want by making $j$
sufficiently large. We will choose $j$ large enough so that $\mu(B -
B_j) < \delta / (2 k)$, so that for any $i$ we have
\begin{align*}
\mu(\bigcap_{l < k}
 T^{-il} B_j) & \geq \mu(\bigcap_{l <
   k} T^{-il} B) - k \cdot (\delta / (2k)) \\
 & = \mu(\bigcap_{l <
   k} T^{-il} B) - \delta / 2.
\end{align*}
Then, since $E(\chi_A \mid \YY_{\alpha_j})
\geq \frac{\mu(A)}{4} \chi_{B_j}$, we will have
\begin{align*}
 \frac{1}{n} \sum_{i < n} \int \prod_{l < k} T^{il} E(\chi_A \mid
 \YY_{\alpha_j}) d \mu & \geq \frac{\mu(A)^k}{4^k} \frac{1}{n} \sum_{i <
   n} \int \prod_{l < k} T^{il} \chi_{B_j}
 d\mu \\
 & = \frac{\mu(A)^k}{4^k} \frac{1}{n} \sum_{i < n} \mu(\bigcap_{l < k}
 T^{-il} B_j) \\
 & \geq \frac{\mu(A)^k}{4^k} \frac{1}{n} \sum_{i < n} (\mu(\bigcap_{l <
   k} T^{-il} B) - \delta / 2) \\
 & \geq \frac{\mu(A)^k}{4^k} (\delta - \delta / 2) \\
 & = \frac{\mu(A) \cdot \delta}{2^{2k + 1}}
\end{align*}
for sufficiently large $n$. Call the right-hand side $\eta$.

Choose $j$ so that in addition to satisfying $\mu(B - B_j) < \delta /
(2 k)$, we also have $1 / j < \eta / 2$. Then, by the construction of
the sequence $(\alpha_j)$, we have
\begin{align*}
\frac{1}{n}\sum_{i < n} \int \prod_{l < k}
T^{il} \chi_A d\mu & \geq \frac{1}{n} \sum_{i < n} \int \prod_{l < k} T^{il}
E(\chi_A \mid \YY_{\alpha_j}) - \eta / 2 \\
& \geq \eta / 2,
\end{align*}
for sufficiently large $n$, as required.
\end{proof}

We now turn to the proof of Theorem~\ref{main:theorem}. Our proof
tracks the usual proof that $\XX$ is weak mixing of all orders
relative to the maximal distal factor, $\YY$; but wherever that proof
asserts that $\XX$ exhibits some behavior relative to $\YY$, we assert
instead that $\XX$ exhibits some approximation to that behavior,
relative to sufficiently many $\YY_\alpha$. The following definitions
provide the notions of ``sufficiently many'' that we will need. If
$\theta$ and $\eta$ are ordinals, $(\theta,\eta]$ denotes the interval
$\{ \delta \; | \; \theta < \delta \leq \eta \}$.

\begin{definition}
 If $\alpha$ is an ordinal, say $s$ is an \emph{$\alpha$-sequence} if
 $s = (s_\beta)_{\beta \leq \alpha}$ is a strictly increasing
 sequence of ordinals indexed by ordinals less than or equal to
 $\alpha$. Say $t$ is a \emph{$\beta$-subsequence of $s$} if $t$ is a
 $\beta$-sequence and a subsequence of $s$. If $s$ is an
 $\alpha$-sequence, the \emph{span} of $s$, written
 $\mathrm{span}(s)$, is $(s_0,s_\alpha]$.
\end{definition}

\begin{definition}
 If $s$ is an $\alpha$-sequence and $P(\delta)$ is any property, say
 \emph{$P$ holds for $s$-many $\delta$} if for every $\beta <
 \alpha$, there is a $\delta$ in $(s_{\beta},s_{\beta+1}]$ such that
 $P(\delta)$ holds.
\end{definition}

In other words, $P(\delta)$ holds for $s$-many $\delta$ if, roughly
speaking, there is an element satisfying $P$ between any two
consecutive elements of $s$.

\section{Approximating the mean ergodic theorem}
\label{met:section}

Let $\mathcal H$ be any Hilbert space, $T$ an isometry, and $f$ any
element of $\mathcal H$. For every $n \geq 1$, let $A_n f = (1 / n)
\sum_{i < n} T^i f$. The mean ergodic theorem says that the sequence
$(A_n f)$ converges in the Hilbert space norm; in other words, for
every $\varepsilon > 0$, there is an $n$ such that for every $m \geq
n$ we have $\| A_m f - A_n f \| < \varepsilon$.

Now let $(\mathcal H_\alpha)_{\alpha \in S}$ be a sequence of Hilbert
spaces indexed by ordinals in some set $S$, let $(T_\alpha)$ be a
sequence of isometries, and let $(f_\alpha)$ be a sequence of
elements. Given $\varepsilon > 0$, the mean ergodic theorem implies
that for every $\alpha$ there is an $n$ as above, but, of course,
different $\alpha$'s may call for different $n$'s.

Here we will be concerned with the case where the spaces $\mathcal
H_\alpha$ are the ones denoted by $L^2(\XX \times_{\YY_\alpha} \XX)$
in Section~\ref{preliminaries:section}, and for some $L^\infty(\XX)$
function $f$, each $f_\alpha$ is the element $f \otimes f$ in the
corresponding space. Our goal is to obtain for every $\varepsilon > 0$
a single $n$ that works for sufficiently many $\alpha$'s. In
Section~\ref{weak:mixing:section}, we will use this to show that
approximate weak mixing behavior occurs sufficiently often relative to
the factors $\YY_\alpha$.

Our original presentation relied on information extracted in \cite{avigad:et:al:unp} from the proof of the mean ergodic theorem due to Riesz \cite{riesz:41}. We are grateful to Ulrich Kohlenbach for pointing out the proofs of the results in this section could be simplified considerably by using information extracted by Kohlenbach and Leu\c{s}tean \cite{kohlenbach:leustean:unp} from a proof of the mean ergodic theorem by Garrett Birkhoff \cite{birkhoff:39}. The following lemma is implicit in \cite{kohlenbach:leustean:unp}, and holds more generally for nonexpansive mappings on a uniformly convex Banach space. It says, roughly, that from a bound on $k$ such that $\| A_k f \|$ is close to its infimum, one can determine a value $n$ beyond which the sequence of ergodic averages is close to its limit.

\begin{lemma}
\label{kl:lemma}
For every $B$ and $\varepsilon > 0$ there is a $\gamma > 0$ with the following property: for every $i$ there is an $n$ such that if $f$ is any element of a Hilbert space $\mathcal H$ with $\| f \| \leq B$, $T$ is an isometry, and there is a $k \leq i$ such that
\begin{equation}
\label{kl:eq}
\| A_k f \| \leq \| A_j f \| + \gamma
\end{equation}
holds for every $j$, then 
\[
\| A_n f - A_m f \| < \varepsilon
\]
for every $m \geq n$.
\end{lemma}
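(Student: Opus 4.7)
The plan is to show that if $\|A_k f\|$ is within $\gamma$ of the infimum $c := \inf_j \|A_j f\|$, then every $A_m f$ for $m$ sufficiently large (depending on $k$, $B$, $\varepsilon$) lies within $\varepsilon/2$ of a single $T$-invariant element $g$. The conclusion $\|A_n f - A_m f\| < \varepsilon$ for $m \geq n$ then follows by the triangle inequality. To produce $g$, I would take the closed convex hull $K$ of $\{T^i f : i \geq 0\}$ and let $g$ be its unique minimum-norm element, which exists by uniform convexity of Hilbert space. Since $T$ is a linear isometry with $T(K) \subseteq K$, we have $Tg \in K$ with $\|Tg\| = \|g\|$, forcing $Tg = g$ and hence $A_m g = g$ for all $m$. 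The shift estimate $\|A_m T^j f - A_m f\| \leq 2j\|f\|/m$ implies that $\|A_m h\|$ and $\|A_m f\|$ share the same liminf for any $h$ in the algebraic convex hull of the orbit, from which $c = \|g\|$ follows.

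Next I would use the projection characterization of $g$: any $h \in K$ satisfies $\mathrm{Re}\langle h, g \rangle \geq \|g\|^2$, and hence $\|h - g\|^2 \leq \|h\|^2 - \|g\|^2$. Applied to $h = A_k f$ with $\|A_k f\| \leq c + \gamma$, this gives $\|A_k f - g\|^2 \leq (2B + \gamma)\gamma$. Choosing $\gamma$ as a function of $B$ and $\varepsilon$ alone so that $\sqrt{(2B+\gamma)\gamma} < \varepsilon/4$ ensures $\|A_k f - g\| < \varepsilon/4$, and because $g$ is $T$-invariant, $\|A_m A_k f - g\| = \|A_m(A_k f - g)\| \leq \|A_k f - g\| < \varepsilon/4$ for every $m$.

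Finally, writing $A_m A_k f - A_m f = \frac{1}{k}\sum_{j < k}(A_m T^j f - A_m f)$ and applying the shift bound above yields $\|A_m A_k f - A_m f\| \leq (k-1)B/m$. Setting $n := \lceil 4Bi/\varepsilon \rceil$ and using $k \leq i$, we get $\|A_m A_k f - A_m f\| < \varepsilon/4$ for all $m \geq n$, so $\|A_m f - g\| < \varepsilon/2$ and the lemma follows. The main point to watch is the quantifier structure: $\gamma$ must depend only on $B$ and $\varepsilon$ (not on $i$ or $f$), while $n$ is permitted to depend on $i$ as well. The projection inequality furnishes an $f$-independent modulus for $\gamma$, and the shift bound is linear in $k \leq i$, so both dependencies come out as required. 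No step is really technically difficult; the argument is essentially an explicit, quantitative form of Birkhoff's proof of the mean ergodic theorem.
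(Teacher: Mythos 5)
Your argument is correct, and it is a genuinely self-contained route: the paper proves this lemma simply by quoting the quantitative mean ergodic theorem of Kohlenbach and Leu\c{s}tean (their analysis of Garrett Birkhoff's proof), reading off $\gamma = (\varepsilon/16)\,\eta(\varepsilon/8B)$ with $\eta$ a modulus of uniform convexity and $n = (16B/\varepsilon)\, i$, whereas you reconstruct the underlying argument from scratch in the Hilbert-space setting. Your key objects --- the minimum-norm element $g$ of the closed convex hull of the orbit, the identity $\|g\| = \inf_j \|A_j f\|$, and the shift estimate $\|A_m T^j f - A_m f\| \leq 2j\|f\|/m$ --- are exactly the ingredients of Birkhoff's proof, but by exploiting the exact projection inequality $\|h - g\|^2 \leq \|h\|^2 - \|g\|^2$ for $h \in K$ you avoid any appeal to a modulus of convexity and get explicit constants ($\gamma \approx \varepsilon^2/(32B)$, $n \approx 4Bi/\varepsilon$) of the same shape as those cited. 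All the steps check out, including the quantifier structure ($\gamma$ depends only on $B,\varepsilon$; $n$ only on $B,\varepsilon,i$), the $T$-invariance of $g$ via uniqueness of the minimal-norm element, and the decomposition $A_m A_k f - A_m f = \frac{1}{k}\sum_{j<k}(A_m T^j f - A_m f)$. The one place you are terse is the claim ``from which $c = \|g\|$ follows'': the direction $\|g\| \leq c$ is immediate since each $A_j f \in K$, and the direction you actually need, $\|g\| \geq c$, follows because any $h$ in the algebraic hull satisfies $\|h\| \geq \liminf_m \|A_m h\| = \liminf_m \|A_m f\| \geq c$, and this passes to the closure; spelling that out would make the write-up complete. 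The trade-off between the two approaches is the expected one: the paper's citation is shorter and yields the lemma for nonexpansive maps on arbitrary uniformly convex Banach spaces, while your proof is elementary, Hilbert-specific, and fully explicit.
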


\begin{proof}
Using the notation of \cite{kohlenbach:leustean:unp}, let $M = 16 B / \varepsilon$, let $n = Mi$, and let $\gamma = (\varepsilon / 16) \eta(\varepsilon / 8b)$, where $\eta$ is a modulus of convexity for Hilbert space. The proof in 
\cite[Section 4, pages 1913--1914]{kohlenbach:leustean:unp} shows that if (\ref{kl:eq}) holds for every $j$, then $\| A_m f - A_l f \| < \varepsilon$ holds for every $m$ and $l$ greater than or equal to $n$. (The $N$ in \cite{kohlenbach:leustean:unp} plays the role of our $i$, and $P$ corresponds to our $n$. Because we are assuming that (\ref{kl:eq}) holds for all $j$, the conclusion of the argument in \cite{kohlenbach:leustean:unp} holds for arbitrary functions $g$.)
\end{proof}

We now fix a sequence of Hilbert spaces $(\mathcal H_\alpha)_{\alpha
 \in S}$, where $S$ is some set of ordinals and each $\mathcal
H_\alpha$ comes equipped with its own inner product $\langle \cdot,
\cdot \rangle_\alpha$ and norm $\| \cdot \|_\alpha$. We also fix an
isometry $T_\alpha$ on each $H_\alpha$. The next theorem deals
with sequences $(f_\alpha)_{\alpha \in S}$, where each
$f_\alpha$ is in $H_\alpha$. For readability, we will adopt the
practice of dropping the subscripted $\alpha$ on terms like $f_\alpha$
and $T_\alpha$ when the context makes it clear. Thus, for example, the expression $\| A_n f \|_\alpha$ really means $\| A_n f_\alpha \|_\alpha$.

Although the sequences $(A_n f)$ converge in each $\mathcal H_\alpha$, they may have very different rates of convergence. The next lemma shows that, nonetheless, as long as there is a uniform bound on the values $\| f \|_\alpha$, for any $\varepsilon > 0$ there is always an $n$ large enough so that, for ``many'' $\alpha$'s, $\| A_n f - A_m f \| < \varepsilon$ holds for all $m \geq n$.

\begin{theorem}
\label{second:lemma}
Let $\varepsilon > 0$ and $B > 0$. Then there is a natural number $K$
such that for every $\alpha^K$-sequence $s$ and every sequence of
elements $(f_\delta)_{\delta \in \mathrm{span}(s)}$ bounded by $B$ in
norm, there are a natural number $n$ and an $\alpha$-subsequence $t$
of $s$, such that the property
\begin{quote}
$\| A_n f - A_m f \|_\delta < \varepsilon$ for every $m \geq n$
\end{quote}
holds for $t$-many $\delta$.
\end{theorem}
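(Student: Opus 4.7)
My plan has two main stages: first reduce the theorem to a combinatorial claim via Lemma~\ref{kl:lemma}, then establish that claim by an iterated ordinal pigeonhole on discretized ergodic averages.

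\textbf{Reduction.} Given $\varepsilon$ and $B$, apply Lemma~\ref{kl:lemma} to obtain $\gamma > 0$ and, for each $i$, a corresponding threshold $n(i)$. Define $h_i(\delta) := \min_{k \leq i}\|A_k f\|_\delta$ and $g(\delta) := \inf_j \|A_j f\|_\delta$, both in $[0, B]$. Since each $T_\delta$ is an isometry on a Hilbert space, $h_i(\delta)$ is non-increasing in $i$ and converges to $g(\delta)$. By Lemma~\ref{kl:lemma}, the theorem follows once I exhibit a single $i \in \NN$ and an $\alpha$-subsequence $t$ of $s$ such that, for $t$-many $\delta$, $h_i(\delta) \leq g(\delta) + \gamma$; the desired $n$ is then $n(i)$.

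\textbf{Iterated pigeonhole.} Partition $[0, B]$ into $N := \lceil 2B/\gamma \rceil$ consecutive half-open intervals of width at most $\gamma/2$, and choose $K$ to be a natural number of size $O(N)$ adequate for the scheme below. Starting from $s$, I iteratively refine: first apply an ordinal Ramsey principle to the $N$-coloring $\delta \mapsto$ ``interval containing $g(\delta)$'' to extract a sub-subsequence on which $g(\delta)$ lies in a common interval $J^*$; then, for $j = 1, 2, \ldots, K$, pigeonhole the coloring $\delta \mapsto$ ``interval containing $h_j(\delta)$'' to extract a further sub-subsequence on which $h_j(\delta)$ lies in a common interval $J_j$. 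The intervals $J_j$ are non-increasing in $j$ (since $h_j$ is), and each $J_j \geq J^*$ (since $h_j \geq g$); because they take at most $N$ distinct values, after $O(N)$ rounds we reach a stage $j^*$ at which the stabilized interval coincides with $J^*$. On the final sub-subsequence, $h_{j^*}(\delta)$ and $g(\delta)$ both lie in $J^*$, so $h_{j^*}(\delta) - g(\delta) \leq |J^*| \leq \gamma/2 < \gamma$. Extracting any $\alpha$-subsequence $t$ from the result then completes the proof with $i := j^*$.

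\textbf{Main obstacle.} The heart of the argument is the ordinal Ramsey step used in each round: each pigeonhole must preserve not merely a cofinal subset of the previous subsequence, but enough ordinal gap structure so that, after $O(N)$ rounds, an $\alpha$-subsequence in the ``$t$-many $\delta$'' sense (a witness in each of $\alpha$ consecutive gaps) still remains. This requires a partition principle of the form $\alpha^M \to (\alpha^{M'})^1_N$; the ordinals $\alpha^K$ (or $\omega^\gamma$) are indecomposable and satisfy very strong such principles, which is why choosing $K$ of order $N$ suffices. The most delicate point is verifying that, at the stabilization index $j^*$, the pigeonhole actually forces $J_{j^*} = J^*$ uniformly for $t$-many $\delta$, rather than allowing $g(\delta)$ to lie strictly below the stabilized interval for the $\delta$'s retained by the refinement; handling this will require carefully interleaving the pigeonholes on $g(\delta)$ and on the $h_j(\delta)$, together with the monotonicity $h_j \searrow g$.
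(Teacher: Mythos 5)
Your first stage (the reduction via Lemma~\ref{kl:lemma} to finding a single $i$ and an $\alpha$-subsequence $t$ such that $h_i(\delta) \leq g(\delta) + \gamma$ for $t$-many $\delta$) is exactly the paper's reduction. The second stage, however, has a genuine gap at its crux. Your argument that the stabilized interval $J_{j^*}$ must coincide with $J^*$ does not work: monotonicity of $j \mapsto J_j$ together with the fact that there are only $N$ intervals gives you stabilization after at most $N$ strict drops, but nothing forces the stabilized interval to be $J^*$ rather than some interval strictly above it. Since the refinement runs for only finitely many stages and the convergence $h_j(\delta) \downarrow g(\delta)$ has no rate that is uniform in $\delta$, each pigeonhole can simply keep retaining those $\delta$ whose averages have not yet come close to their infimum; after $K$ rounds you may well have $h_{j^*}(\delta) - g(\delta) > \gamma$ for every retained $\delta$. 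Forcing $J_{j^*} = J^*$ for many $\delta$ simultaneously is precisely the uniformization the theorem asserts, so the step you defer (``the most delicate point'') is not a technicality but the whole content. A second problem is the partition principle you invoke: the theorem quantifies over \emph{every} $\alpha^K$-sequence, with $K$ depending only on $\varepsilon$ and $B$, and for finite $\alpha$ (or for decomposable $\alpha^K$, e.g.\ $\alpha = \omega+1$) relations of the form $\alpha^M \to (\alpha^{M'})^1_N$ fail, so homogenization by color is not available in the required generality; moreover the ``$t$-many $\delta$'' conclusion is a witness-in-every-gap condition, not homogeneity, and your scheme never explains how the gap structure survives $O(N)$ extractions.

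The paper avoids homogenization entirely and argues by contradiction, which is why it needs no partition relation and no control of $g(\delta)$ across different $\delta$. Assuming that no $i$ and no $\alpha$-subsequence works, one gets (*): for every $i$ and every $\alpha$-subsequence of $s$ there are a $j > i$ and an entire gap $(s_\beta, s_{\beta+1}]$ on which $a_j < a_i - \gamma$, where $a_i = \inf_{k \leq i} \| A_k f \|_\delta$. Viewing the $\alpha^K$-sequence as $\alpha$-many consecutive $\alpha^{K-1}$-blocks, (*) selects one block on whose whole span $a$ has dropped by $\gamma$; repeating inside that block, with $K = \lceil B/\gamma \rceil + 1$, one nests $K$ times and lands on a single $\delta$ at which the monotone sequence $a_{i_0} \geq a_{i_1} \geq \cdots$, bounded in $[0,B]$, has dropped by $\gamma$ more than $B/\gamma$ times --- a contradiction. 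Note that only a pointwise drop count at one final $\delta$ is used; no approximate constancy of $g$ or of $h_j$ over many $\delta$ is ever needed. If you want to salvage your construction, you would have to replace the color-homogenization by something of this nested-block type, at which point you have essentially reproduced the paper's proof.
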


\begin{proof}
For each $i$, write $a_{i,\delta} = \inf_{j \leq i} \| A_j f_\delta \|_\delta$. According to the convention above, we will leave the subscripted $\delta$'s off of $f_\delta$ and $a_{i,\delta}$ but keep the dependence in mind. For each $\delta$, the sequence $a_i$ is a decreasing sequence bounded above by $B$ and below by $0$. Let $\gamma$ be as guaranteed to exist by Lemma~\ref{kl:lemma}. 

Now let $K = \lceil B / \gamma \rceil + 1$, let $s$ be any $\alpha^K$-sequence, and let $(f_\delta)_{\delta \in \mathrm{span}(s)}$ be a sequence of elements bounded by $B$ in
norm. It suffices to show that there are a natural number $i$ and an $\alpha$-subsequence $t$ of $s$ such that the property
\begin{quote}
for every $j > i$, $a_i \leq a_j + \gamma$
\end{quote}
holds for $t$-many $\delta$, because then the hypotheses of Lemma~\ref{kl:lemma}, and hence the conclusion, are satisfied for these $\delta$'s.

Suppose otherwise. Then we have the following (*):
\begin{quote}
 For every $i$ and $\alpha$-subsequence $t$ of $s$, there are $j > i$
 and $\beta < \alpha$ such that for every $\delta \in
 (s_\beta,s_{\beta+1}]$, $a_j < a_i - \gamma$.
\end{quote}
Start with $i_0 = 0$, in which case $a_{i_0} = \| f \|$. Think of $s$ as consisting of $\alpha$-many consecutive
$\alpha^{K-1}$-subsequences, overlapping only at the endpoints, so that
the last element of one is the first element of the next. We can then
use (*) to find an $i_1 > i_0$ and one of those subsequences such that $a_{i_1} < a_{i_0} - \gamma$ on its span. Then think of
\emph{that} subsequence as consisting of $\alpha$-many consecutive
$\alpha^{K-2}$-subsequences, and use (*) again to find an $i_2 > i_1$ and
one of \emph{those} sequences such that $a_{i_2} < a_{i_1} - \gamma$ on its span.
Continuing in this way we ultimately find a $\delta$
and a sequence $a_{i_0}, a_{i_1}, \ldots, a_{i_K}$ such that for each $u < K$ we have $a_{i_{u+1}} < a_{i_u} - \gamma$ at $\delta$. But this contradicts the fact that, by the choice of $K$,  $a_{i_u}$ can decrease by $\gamma$ at most $K$ times.
\end{proof}

We now specialize to the situation where each $\mathcal H_\alpha$ is
$L^2(\XX \times_{\YY_\alpha} \XX)$, and each $f_\alpha$ is $f \otimes
f$, for some fixed $L^\infty(\XX)$ function $f$. This meets
the requirements of the lemma, because we have
$\| f \otimes f \|^2_\alpha = \langle f \otimes f, f \otimes f
\rangle_\alpha = \int E(f^2 \mid \YY_\alpha)^2 d\mu \leq \| f
\|^4_\infty$ for each $\alpha$.
Thus we have a uniform approximate
version of the mean ergodic theorem for $L^2(\XX \times_{\YY_\alpha}
\XX)$.

\begin{theorem}
\label{met:theorem}
Let $\varepsilon > 0$ and $B > 0$. Then there is a natural number $K$
such that for every $\alpha^K$-sequence $s$ and every $f$ in
$L^\infty(\XX)$ with $\| f \|_\infty \leq B$, there are a natural
number $n$ and an $\alpha$-subsequence $t$ of $s$, such that the
property
\begin{quote}
for every $m \geq n$, $\| A_n (f \otimes f) - A_m (f \otimes
f) \|_\delta < \varepsilon$
\end{quote}
holds for $t$-many $\delta$.
\end{theorem}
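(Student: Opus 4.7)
The plan is to observe that Theorem~\ref{met:theorem} is essentially an immediate specialization of Theorem~\ref{second:lemma}, once the uniform norm bound is verified. The work is therefore almost entirely bookkeeping, with the only real content being to confirm that the hypotheses of Theorem~\ref{second:lemma} apply to the family of spaces $\mathcal{H}_\alpha = L^2(\XX \times_{\YY_\alpha} \XX)$ with distinguished elements $f_\alpha = f \otimes f$.

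First, I would fix $B$ and let $f$ range over $L^\infty(\XX)$ functions with $\|f\|_\infty \leq B$. I need a uniform norm bound on $f \otimes f$ that depends only on $B$, not on $\alpha$. Using the inner product description given in Section~\ref{preliminaries:section},
\[
\|f \otimes f\|^2_\alpha = \langle f \otimes f, f \otimes f \rangle_\alpha = \int E(f^2 \mid \YY_\alpha)^2 \, d\mu \leq \|f\|_\infty^4 \leq B^4,
\]
where I used that conditional expectation is a contraction on $L^\infty$, so $E(f^2 \mid \YY_\alpha) \leq \|f\|_\infty^2$ almost everywhere. Hence $\|f \otimes f\|_\alpha \leq B^2$ uniformly in $\alpha$.

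Second, I would apply Theorem~\ref{second:lemma} to the family $(\mathcal{H}_\alpha)_{\alpha \in S}$ equipped with the isometries $T_\alpha$ (the action of $T$ on $L^2(\XX \times_{\YY_\alpha} \XX)$, which is an isometry by the definition given earlier), with bound $B^2$ in place of $B$ and the same $\varepsilon$. This yields a natural number $K$ depending only on $\varepsilon$ and $B$ such that for every $\alpha^K$-sequence $s$, and in particular for the sequence $(f \otimes f)_{\delta \in \mathrm{span}(s)}$ (which is bounded by $B^2$ in norm by the computation above), there exist a natural number $n$ and an $\alpha$-subsequence $t$ of $s$ such that $\|A_n(f \otimes f) - A_m(f \otimes f)\|_\delta < \varepsilon$ for every $m \geq n$ and $t$-many $\delta$.

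There is no real obstacle here: the only technicality is verifying the inner-product computation and noting that the value of the uniform bound can be absorbed into the constant $B$ supplied to Theorem~\ref{second:lemma}. The substantive content — the use of Lemma~\ref{kl:lemma} together with the combinatorial pigeonhole on a sufficiently tall tower of subsequences — has already been carried out in the proof of Theorem~\ref{second:lemma}, so this theorem is essentially a corollary.
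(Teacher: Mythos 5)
Your proposal is correct and matches the paper's own argument exactly: the paper derives Theorem~\ref{met:theorem} by specializing Theorem~\ref{second:lemma} to $\mathcal H_\alpha = L^2(\XX \times_{\YY_\alpha} \XX)$ with $f_\alpha = f \otimes f$, using the same computation $\| f \otimes f \|^2_\alpha = \int E(f^2 \mid \YY_\alpha)^2 \, d\mu \leq \| f \|_\infty^4$ to get the uniform norm bound. Nothing is missing.
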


Notice that if $s$ is the trivial $1$-sequence $\delta,\delta+1$,
Theorem~\ref{met:theorem} simply asserts that $A_n (f \otimes f)$
converges in $\XX \times_{\YY_{\delta + 1}} \XX$.

\section{Approximating weak mixing}
\label{weak:mixing:section}

Let $g$ be in $L^\infty(\XX)$. Now notice that the elements $H_g^n$ of
the spaces $L^2(\XX \times_{\YY_\delta} \XX)$, defined in
Section~\ref{preliminaries:section}, are none other than the elements
$A_n (g \otimes g)$, where $A_n$ is as in Section~\ref{met:section}.
Let $f$ be any element of $L^2(\XX)$. As we observed in
Section~\ref{preliminaries:section}, the rate of convergence of $H_g^n
*_{\YY_\delta} f$ to $H_g f$ in $L^2(\XX \times_{\YY_\delta} \XX)$
depends only on the rate of convergence of $H_g^n$ to $H_g$ and on $\|
f \|_{L^2(\XX)}$. 



We now use this to obtain our first main result, to the effect that
$\XX$ exhibits approximate weak mixing behavior relative to the
factors $\YY_\delta$, for sufficiently many ordinals $\delta$.

\begin{theorem}
\label{wm:theorem}
For every $\varepsilon > 0$ and $B>0$ there is a natural number $K$ such that
for every $\alpha \geq \omega$, every $\alpha^K$-sequence $s$, and
every $f$ and $g$ with $\| f \|_\infty \leq B$ and $\| g \|_\infty
\leq B$, there are an $n$ and an $\alpha$-subsequence $t$ of $s$, such
that the property
\begin{quote}
 for every $m \geq n$, $\frac{1}{m} \sum_{i < m}\int\left[E(fT^ig \mid \YY_\delta)- E(f \mid
   \YY_\delta)T^iE(g \mid \YY_\delta) \right]^2 d\mu<\varepsilon$
\end{quote}
 holds for $t$-many $\delta$.
\end{theorem}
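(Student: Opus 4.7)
The plan is to reduce the average in the theorem to a pairing in $L^2(\XX \times_{\YY_\delta} \XX)$, split it into a tail term controlled by Theorem~\ref{met:theorem} and a limit term controlled by the ``jump'' $E(f\mid\YY_{\delta+1}) - E(f\mid\YY_\delta)$, and then combine via a pigeonhole that exploits $\alpha\geq\omega$.

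Set $f'_\delta := f - E(f\mid\YY_\delta)$. A direct calculation (using that $\YY_\delta$ is $T$-invariant so $E(T^ig\mid\YY_\delta)=T^iE(g\mid\YY_\delta)$) gives
\[
E(fT^ig\mid\YY_\delta) - E(f\mid\YY_\delta)T^iE(g\mid\YY_\delta) = E(f'_\delta \cdot T^ig\mid\YY_\delta),
\]
so the average in question equals $\psi_m(\delta):=\langle f'_\delta\otimes f'_\delta,\, H_g^m\rangle_{\YY_\delta}$. Write $\psi_m(\delta) = \psi_\infty(\delta) + \langle f'_\delta\otimes f'_\delta,\, H_g^m - H_g\rangle_{\YY_\delta}$. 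Cauchy--Schwarz bounds the tail by $C_1\|H_g^m - H_g\|_{\YY_\delta}$ with $C_1$ depending only on $B$. For the limit term, the identity $\langle u\otimes u,H\rangle_{\YY_\delta} = \langle u, H *_{\YY_\delta} u\rangle$ gives $\psi_\infty(\delta) = \langle f'_\delta,\, H_g *_{\YY_\delta} f'_\delta\rangle$. By Lemma~\ref{zy:lemma}, $H_g *_{\YY_\delta} f'_\delta \in Z(\YY_\delta)=\YY_{\delta+1}$, so the inner product sees only the $\YY_{\delta+1}$-projection of $f'_\delta$, which equals $E(f\mid\YY_{\delta+1}) - E(f\mid\YY_\delta)$. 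A second Cauchy--Schwarz yields
\[
|\psi_\infty(\delta)|\leq C_2\bigl\|E(f\mid\YY_{\delta+1}) - E(f\mid\YY_\delta)\bigr\|.
\]

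The structural input is that the subspaces $\YY_{\delta+1}\ominus\YY_\delta$ are pairwise orthogonal in $L^2(\XX)$, so the jumps telescope:
\[
\sum_\delta \bigl\|E(f\mid\YY_{\delta+1}) - E(f\mid\YY_\delta)\bigr\|^2 \leq \|f\|_2^2 \leq B^2\mu(X).
\]
Hence, for any $\gamma>0$, at most $J = \lceil B^2\mu(X)/\gamma^2\rceil$ ordinals $\delta$ have jump at least $\gamma$. Now choose $\gamma$ with $C_2\gamma<\varepsilon/2$ and $\varepsilon'$ with $2C_1\varepsilon'<\varepsilon/2$, let $K_0=K_0(\varepsilon',B)$ be given by Theorem~\ref{met:theorem}, and set $K = 2K_0$. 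Given an $\alpha^K$-sequence $s$, view it as an $(\alpha^2)^{K_0}$-sequence and apply Theorem~\ref{met:theorem} to $g$ to obtain $n$ and an $\alpha^2$-subsequence $s'$ of $s$ together with, for each $\gamma<\alpha^2$, a MET-good witness $\delta^{(\gamma)}\in(s'_\gamma,s'_{\gamma+1}]$. Then take $t=(s'_{\beta\cdot\alpha})_{\beta\leq\alpha}$, an $\alpha$-subsequence of $s$; each $t$-interval covers $\alpha$ consecutive $s'$-intervals and so contains $\alpha$ MET-good witnesses $\delta^{(\gamma)}$. Since at most $J$ of the $\alpha^2$ witnesses globally can violate the jump bound and $\alpha\geq\omega > J$, each $t$-interval contains at least one $\delta$ that is both MET-good and jump-small, and for such $\delta$ we have $\psi_m(\delta)<\varepsilon$ for every $m\geq n$.

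The main obstacle is keeping the quantifier order straight while stacking two approximation errors below $\varepsilon$: one must fix $\gamma$ and $\varepsilon'$ (hence $J$ and $K_0$) first, and only then choose the subsequences. The hypothesis $\alpha\geq\omega$ enters only at the final pigeonhole, to guarantee $\alpha>J$; in particular the resulting $K$ does not depend on $\alpha$.
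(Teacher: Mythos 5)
Your proposal is correct and takes essentially the same route as the paper: the same reduction of the average to $\int \left(f-E(f\mid\YY_\delta)\right)\cdot\left(H_g^m *_{\YY_\delta}(f-E(f\mid\YY_\delta))\right)d\mu$, the same appeal to Theorem~\ref{met:theorem} for the tail term, and the same key fact that $H_g *_{\YY_\delta} h$ lies in $\YY_{\delta+1}$ so that everything reduces to making the successor jump $E(f\mid\YY_{\delta+1})-E(f\mid\YY_\delta)$ small. Your only real variation is how you locate small-jump witnesses --- a global Bessel/pigeonhole bound (at most $\lceil \|f\|_2^2/\gamma^2\rceil$ large jumps in the entire tower) instead of the paper's per-block observation that among $\omega$ many MET-good witnesses some consecutive projections must be close --- which is essentially the quantitative refinement the paper itself records in the remark after the proof (an $(L\cdot\alpha)$-sequence with $L>4/\varepsilon^2$ suffices); just note that with the paper's convention for ordinal multiplication your thinning should read $t_\beta=s'_{\alpha\cdot\beta}$ rather than $s'_{\beta\cdot\alpha}$, since $\beta\mapsto\beta\cdot\alpha$ need not be strictly increasing.
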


\begin{proof}
 For any $\delta$, if we set $h_\delta$ equal to $f-E(f\mid\delta)$,
 we have
\begin{align*}
\frac{1}{m}\sum_{i<m}\int & \left[E(fT^ig\mid \YY_\delta)-E(f\mid
\YY_\delta)T^iE(g\mid \YY_\delta)\right]^2d\mu\\
& =\frac{1}{m}\sum_{i<m}\int\left[E(h_\delta T^ig+E(f\mid
\YY_\delta)T^ig\mid \YY_\delta)-E(f\mid \YY_\delta)T^iE(g\mid
\YY_\delta)\right]^2d\mu\\
& =\frac{1}{m}\sum_{i<m}\int\left[E(h_\delta T^ig\mid \YY_\delta)\right]^2d\mu\\
& =\frac{1}{m}\sum_{i<m}\int E(h_\delta T^ig\mid \YY_\delta)E(h_\delta
T^ig\mid \YY_\delta) \; d\mu\\
& =\int E(h_\delta\frac{1}{m}\sum_{i<m}T^igE(h_\delta T^ig\mid
\YY_\delta)\mid \YY_\delta) \; d\mu\\
& =\int E(h_\delta \cdot (H^m_g *_{\YY_\delta} h_\delta) \mid
\YY_\delta) \; d\mu \\
& =\int h_\delta \cdot (H^m_g *_{\YY_\delta} h_\delta) \; d\mu.
\end{align*}
Here is the idea: by Theorem~\ref{met:theorem}, we can make $H^m_g
*_{\YY_\delta} h_\delta$ close to $H_g *_{\YY_\delta} h_\delta$ for
sufficiently many $\delta$. By the definition of the transfinite
sequence of factors ($\YY_\delta$), $H_g *_{\YY_\delta} h_\delta$ is
in $\YY_{\delta + 1}$. On the other hand, $h_{\delta+1}$ is orthogonal
to $\YY_{\delta + 1}$, so $\int h_{\delta+1} \cdot (H_g *_{\YY_\delta}
h_\delta) \; d\mu$ is equal to $0$. Thus, as long as
\[
h_{\delta+1} - h_{\delta } = E(f \mid \YY_{\delta + 1}) - E(f \mid \YY_\delta)
\]
is small, $\int h_\delta \cdot (H^m_g *_{\YY_\delta} h_\delta) \;
d\mu$ will be close to $0$, as required.

But now suppose we obtain a countable sequence $\delta_0 < \delta_1 <
\delta_2 < \ldots$ of ordinals, where $H^m_g *_{\YY_{\delta_i}}
h_{\delta_i}$ is close to $H_g *_{\YY_{\delta_i}} h_{\delta_i}$ for
each $i$. Then since $(E(f \mid \YY_{\delta_i}))_{i \in \NN}$ is a
sequence of projections of $f$ onto increasing factors, for some $i$
we will have that $E(f \mid \YY_{\delta_{i+1}}) - E(f \mid
\YY_{\delta_i})$, and hence $h_\delta - h_{\delta+1}$, is sufficiently
small. Such a $\delta_i$ is then one of the ordinals we are after.

The details are as follows. Given $\varepsilon > 0$, apply
Lemma~\ref{met:theorem} to $\varepsilon / 2B$, and
let $K$ satisfy the conclusion of that lemma. We claim that $2 K$
satisfies the conclusion of Theorem~\ref{wm:theorem}.

Suppose we are given an $\alpha^{2 K}$-sequence $s$, and $f$ and $g$
satisfying $\| f \|_\infty \leq B$ and $\| g \|_\infty \leq B$. Since
$\alpha \geq \omega$, we have $\alpha^{2K} = (\alpha^2)^K \geq (\omega
\cdot \alpha)^K$, and we can restrict our attention to the initial
$(\omega \cdot \alpha)^K$-subsequence of $s$. By our choice of $K$,
there is an $\omega \cdot \alpha$-subsequence $t$ such that the
property (*)
\begin{quote}
for every $m \geq n$ and $h$ with $\| h \|_{L^2(\XX)} \leq B$,
$\| H^m_g *_{\YY_\delta} h -
H_g *_{\YY_\delta} h \| < \varepsilon / 2$
\end{quote}
holds for $t$-many $\delta$.

Let $t'$ be the $\alpha$-sequence obtained by taking every $\omega$th
element of $t$; that is, That is, define $t'_{\beta} = t_{\omega \cdot
 \beta}$ for each $\beta \leq \alpha$. We claim that the property (**)
\begin{quote}
 for every $m \geq n$, $\int h_\delta \cdot (H^m_g *_{\YY_\delta}
 h_\delta) \; d\mu < \varepsilon$
\end{quote}
holds for $t'$-many $\delta$, as required.

To prove this, let $\beta < \alpha$. We need to show that there is a
$\delta$ satisfying
\[
t_{\omega \cdot \beta} = t'_\beta < \delta \leq t'_{\beta + 1} =
t_{\omega \cdot \beta + \omega}
\]
such that $\int h_\delta \cdot (H^m_g *_{\YY_\delta} h_\delta) \; d\mu
< \varepsilon$. By our choice of $t$, for every $i$ there is a
$\delta_i \in (t_{\omega \cdot \beta + i}, t_{\omega \cdot \beta + i +
 1}]$ satisfying (*) with $\delta_i$ in place of $\delta$. Choose $i$
such that
\[
\| h_{\delta_i + 1} - h_{\delta_i} \| =
\| E(f \mid \YY_{\delta_i+1}) - E(f \mid \YY_{\delta_i}) \| <
\varepsilon / 2B^2.
\]

Now for $\delta = \delta_i$, we have
\begin{align*}
h_\delta \cdot (H^m_g *_{\YY_\delta} h_\delta) =
h_\delta \cdot & ((H^m_g *_{\YY_\delta} h_\delta) - (H_g *_{\YY_\delta}
h_\delta)) + \\
& (h_\delta - h_{\delta+1}) \cdot (H_g *_{\YY_\delta}
h_{\delta+1}) + h_{\delta+1} \cdot (H_g *_{\YY_\delta} h_\delta).
\end{align*}
For every $m \geq n$, by (*), the first term is bounded in $L^2(\XX)$
norm by $\| h_\delta \|_\infty \cdot \varepsilon / 2B$, which is less than
$\varepsilon / 2$, since since $\| h_\delta \|_\infty \leq B$. The second
term is bounded in $L^2(\XX)$ norm by $(\varepsilon / 2B^2) \cdot \| H_g
*_{\YY_\delta} h_{\delta+1} \|_\infty$, which is less than
$\varepsilon / 2$, since $\| H_g \|_\infty \leq B^2$. The integral of the
last term is $0$, since $h_{\delta + 1}$ is orthogonal to
$\YY_{\delta+1}$ and $H_g *_{\YY_\delta} h_\delta$ is an element of
$\YY_{\delta+1}$. Hence we have $\int h_\delta \cdot (H^m_g
*_{\YY_\delta} h_\delta) \; d\mu < \varepsilon$, as required.
\end{proof}

Notice that, in the previous proof, we did not really need an $(\omega
\cdot \alpha)$-sequence $t$ satisfying (*); an $(L \cdot
\alpha)$-sequence would have been sufficient, with $L > 4 /
\varepsilon^2$. Furthermore, if $\alpha$ is any limit ordinal, then $L
\cdot \alpha = \alpha$. Note also that we could just as well have
switched the two steps of thinning $s$: starting with an $(\alpha^K
\cdot L)$-sequence $s$, we could have obtained an
$\alpha^K$-subsequence $t'$ such that $\| E(f | \YY_\gamma) - E(f |
\YY_\delta) \| < \varepsilon / 2$ for every $\gamma$ and $\delta$ in the
span of $t'$, and then applied Lemma~\ref{met:theorem} to obtain an
$\alpha$-subsequence $t$ such that (*) holds for $t$-many $\delta$. In
particular, any sequence of length $L$ is sufficient to obtain a
$1$-sequence $t$ such that the conclusion of Theorem~\ref{wm:theorem}
holds for $t$-many $\delta$, which is to say, at least one $\delta$ in
the span of $t$. This shows that for $k = 2$,
Theorem~\ref{main:theorem} holds with $\omega$ in place of
$\omega^{\omega^\omega}$.

\section{Approximating weak mixing of all orders}
\label{all:orders:section}

In this section, we show how to approximate the property of being weak
mixing of all orders relative to the maximal distal factor below level
$\omega^{\omega^\omega}$ in the Furstenberg-Zimmer tower. Our proof
parallels the proof in \cite{furstenberg:et:al:82} that the fact that
$\XX$ is weak mixing relative to $\YY$ implies that it is weak mixing
of all orders relative to $\YY$; but wherever that proof asserts that
some property holds relative to $\YY$, we assert that a corresponding
property holds relative to $\YY_\delta$, for sufficiently many
$\delta$'s. Unlike the properties in the previous section, for which
sequences of length $\alpha^K$ with integer $K$ were sufficient, we
will need to consider sequences of the length $\alpha^\theta$, where
$\theta$ is ordinal less than $\omega^\omega$.

We start by proving three technical lemmas, which correspond to claims
that are trivial in the original proof, but become more complicated in
our modified version. To give a typical example, if both
\[
\frac{1}{m}\sum_{i<m}
 \left \| E(fT^ig \mid \YY)-E(f \mid \YY)T^iE(g \mid
   \YY) \right\| \rightarrow 0\]
and
\[\frac{1}{m}\sum_{i<m}
 \left\| E(f'T^ig \mid \YY)-E(f' \mid \YY)T^iE(g \mid
   \YY) \right\| \rightarrow 0,\]
then
\[\frac{1}{m}\sum_{i<m}
\\\left\| E((f+f')T^ig \mid \YY)-E((f+f') \mid \YY)T^iE(g \mid \YY)
\right\| \rightarrow 0,\] and such inferences are used many times in
the proof in \cite{furstenberg:et:al:82}. In our ``approximate''
version, however, we typically wish to show that for each
$\varepsilon$ we can find ``many'' $\delta$ such that the third
average is less than $\varepsilon$ with respect to $\YY_\delta$, using
the fact that the first two averages are small with respect to many
$\YY_\delta$.  In particular, this requires finding many $\delta$ such
that the first two averages are small simultaneously at $\YY_\delta$.

Since the same situation recurs during the proof with many different
choices of the precise averages being controlled, we will state the
lemmas in a very general form.  We will work with properties
$\ph(\delta)$ which assert that a quantity computed with respect to
$\YY_\delta$ is small; for instance, in the example
above, the first choice of $\varphi(f,m,\delta)$ would be
\[
\frac{1}{m}\sum_{i<m}\left\|E(fT^ig \mid \YY_\delta)-E(f \mid
  \YY_\delta)T^iE(g \mid \YY_\delta) \right\| \leq\varepsilon.
\]
We will use the fact that such properties are continuous in the
following sense.

\begin{definition}
  A property $\varphi(\vec x,\delta)$ is \emph{continuous in $\delta$}
  if for any choice of values $\vec t$ for $\vec x$ such that
  $\varphi(\vec t,\delta_i)$ holds for all $i$, also $\varphi(\vec
  t,\sup_i\delta_i)$.
\end{definition}

The first lemma says that we can arrange for a pair of continuous properties to
hold for many $\delta$ simultaneously by arranging for each property, in turn,
to hold sufficiently often.

\begin{lemma}
 Suppose $\ph_1(\vec x,\delta)$ and $\ph_2(\vec x,\delta)$ are continuous in
$\delta$. Fix $\vec x$. 

Suppose there is a $\theta_1<\omega^{p}$ such that for every
$\alpha^{\theta_1}$-sequence $s$ with $\alpha\geq\omega$ and every $f$ with
$\|f\|_{L^\infty}\leq B$, there are a natural number $n_1$ and an
$\alpha$-subsequence $t$ of $s$ such that the property
\begin{quote}
for every  $m\geq n_1$, $\varphi_1(f,m,\delta)$
\end{quote}
holds for $t$-many $\delta$.

Suppose that, additionally, there is a $\theta_2<\omega^{q}$ such that for
every $\alpha^{\theta_2}$-sequence $s$ with $\alpha\geq\omega$ and every
$f$ with $\|f\|_{L^\infty}\leq B$, there are a natural number $n_2$
and an $\alpha$-subsequence $t$ of $s$ such that the property
\begin{quote}
for every  $m\geq n_2$, $\varphi_2(f,m,\delta)$
\end{quote}
holds for $t$-many $\delta$.

Then there is a
$\theta<\omega^{p+q-1}$ such that for every $\alpha^\theta$-sequence $s$ with
$\alpha\geq\omega$ and every $f$ with $\|f\|_{L^\infty}\leq B$, there
are a natural number $n$ and an $\alpha$-subsequence $t$ of $s$ such
that the property
\begin{quote}
for every  $m\geq n$, $\varphi_1(f,m,\delta)$ and $\varphi_2(f,m,\delta)$
\end{quote}
holds for $t$-many $\delta$.
\label{tech:pair}
\end{lemma}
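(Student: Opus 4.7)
I would take $\theta = \theta_1 \cdot \theta_2$ (ordinal product). To check $\theta < \omega^{p+q-1}$: since $\theta_1 < \omega^p = \omega^{p-1} \cdot \omega$, we may write $\theta_1 \leq \omega^{p-1} \cdot k$ for some natural number $k$, so $\theta_1 \cdot \theta_2 \leq \omega^{p-1} \cdot (k \cdot \theta_2)$. When $\theta_2 \geq \omega$ the factor $k$ is absorbed and $k \cdot \theta_2 = \theta_2 < \omega^q$, giving $\theta_1 \theta_2 < \omega^{p+q-1}$; when $\theta_2 < \omega$ we have $\theta_1 \theta_2 < \omega^p \leq \omega^{p+q-1}$. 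A slight enlargement of $\theta$, if needed for the $\omega$-fattening below, still stays within the bound because $\omega^{p+q-1}$ is additively indecomposable.

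Given an $\alpha^\theta$-sequence $s$ with $\alpha \geq \omega$ and an $f$ with $\|f\|_{L^\infty} \leq B$, I would apply the two hypotheses in sequence. Using $\alpha^{\theta_1 \theta_2} = (\alpha^{\theta_2})^{\theta_1}$ together with $\alpha^{\theta_2} \geq \omega$, hypothesis 1 (applied with base $\alpha^{\theta_2}$ in place of $\alpha$) yields a natural number $n_1$ and an $\alpha^{\theta_2}$-subsequence $u$ of $s$ on which, for $u$-many $\delta$, $\varphi_1(f, m, \delta)$ holds for all $m \geq n_1$. Regarding $u$ as an $\alpha^{\theta_2}$-sequence, hypothesis 2 with base $\alpha$ then produces $n_2$ and an $\alpha$-subsequence $v$ of $u$ on which, for $v$-many $\delta$, $\varphi_2(f, m, \delta)$ holds for all $m \geq n_2$. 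Setting $n = \max(n_1, n_2)$, what remains is to find, in each gap of $v$, a single $\delta$ satisfying both $\varphi_1$ and $\varphi_2$ for all $m \geq n$.

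This last step is where I expect the main work. Hypothesis 2 supplies one $\delta$ with $\varphi_2$ in each gap of $v$, while $v \subseteq u$ means that gap decomposes into many gaps of $u$, each contributing its own $\delta$ with $\varphi_1$; these witnesses generically differ. I plan to reconcile them using the continuity of $\varphi_1$ and $\varphi_2$ in $\delta$ together with an $\omega$-fattening trick reminiscent of the proof of Theorem~\ref{wm:theorem}: by applying the two hypotheses with bases carrying an extra factor of $\omega$ and then thinning by every $\omega$th element, each gap of the final $\alpha$-subsequence $v$ can be arranged to contain $\omega$-many $\varphi_1$-witnesses and $\omega$-many $\varphi_2$-witnesses with a common accumulation ordinal $\delta^*$. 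Continuity of $\varphi_1$ and $\varphi_2$ then upgrades both properties simultaneously to $\delta^*$. The main obstacle is orchestrating the two constructions so that these accumulation points coincide, and checking that the extra $\omega$-factors can be absorbed into the already-large bases $\alpha^{\theta_2}$ and $\alpha$ without pushing $\theta$ past $\omega^{p+q-1}$ — which is available precisely because $\omega^{p+q-1}$ is additively indecomposable.
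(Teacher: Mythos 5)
Your route is essentially the paper's: take $\theta$ of the order $\theta_1\cdot\theta_2$ (the paper uses $2\cdot\theta_1\cdot\theta_2$, still below $\omega^{p+q-1}$), apply the two hypotheses sequentially through the factorization $\alpha^{\theta_1\cdot\theta_2}=(\alpha^{\theta_2})^{\theta_1}$, and then merge the two kinds of witnesses by an $\omega$-thinning plus continuity. But the step you label ``the main obstacle'' --- getting the $\varphi_1$- and $\varphi_2$-accumulation points to coincide --- is exactly the step you have not carried out, and in the setup as you wrote it it cannot even get started: once you have thinned all the way down to an $\alpha$-subsequence $v$ via hypothesis 2, a gap of $v$ is only guaranteed a \emph{single} $\varphi_2$-witness, so there is nothing on the $\varphi_2$ side to accumulate. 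The repair (which is what the paper does, and why it takes $\theta=2\cdot\theta_1\cdot\theta_2$) is to stop the sequential thinning one level higher, at an $\alpha^2$-subsequence $t'$: then $\varphi_2$ holds for $t'$-many $\delta$ from the second application, and $\varphi_1$ \emph{also} holds for $t'$-many $\delta$ automatically, since every consecutive gap of $t'$ contains an entire gap of the thicker subsequence produced by the first application (the ``many'' property passes gap-wise to any further subsequence). So each gap of $t'$ contains a witness of each kind, for all $m\ge n=\max(n_1,n_2)$. Now set $t_\beta:=t'_{\omega\cdot\beta}$; inside $(t_\beta,t_{\beta+1}]$ the $\varphi_1$-witnesses and the $\varphi_2$-witnesses are both interleaved with the points $t'_{\omega\cdot\beta+j}$, $j<\omega$, hence the two families have the \emph{same} supremum $\delta^*\in(t_\beta,t_{\beta+1}]$, and continuity, applied for each fixed $m\ge n$, gives $\varphi_1(f,m,\delta^*)$ and $\varphi_2(f,m,\delta^*)$ simultaneously. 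In other words, the coincidence of accumulation points is automatic from interleaving; no orchestration of the two constructions is needed, but this observation is the content of the lemma and must be made.

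Two smaller points. Your ordinal-bound verification has a slip: $k\cdot\theta_2=\theta_2$ is false in general for finite $k$ and infinite $\theta_2$ (e.g.\ $2\cdot(\omega+1)=\omega+2$); what is true, and all you need, is that left multiplication by a finite ordinal does not raise the degree of the leading term, so $k\cdot\theta_2<\omega^q$ and hence $\theta_1\cdot\theta_2\le\omega^{p-1}\cdot(k\cdot\theta_2)<\omega^{p+q-1}$, and the same computation absorbs the extra factor $2$ (or $\omega$). Also, applying hypothesis 1 with base $\alpha^{\theta_2}$ uses $\alpha^{\theta_2}\ge\omega$, which is immediate from $\alpha\ge\omega$ but should be said, since the hypotheses are only assumed for bases $\ge\omega$.
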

\begin{proof}
 Given $\theta_1$ and $\theta_2$ as in the hypotheses, let $\theta
 =2\cdot \theta_1\cdot \theta_2$.  Let $s$ be an $\alpha^{2\cdot
   \theta_1\cdot \theta_2}$-sequence, and let $f$ be given.  Applying
 the hypotheses sequentially, we obtain an $\alpha^{2}$-subsequence
 $t'$ and an $n = \max (n_1,n_2)$ such that both the properties
 $\forall m\geq n \; \varphi_1(f,m,\delta)$ and $\forall m\geq n \;
 \varphi_2(f,m,\delta)$ hold for $t'$-many $\delta$.  Since
 $\alpha\geq\omega$, we can consider the $\alpha$-subsequence $t$ of
 $t'$ given by setting $t_\beta:=t'_{\beta\cdot\omega}$ for each
 $\beta \leq \alpha$.  For any $\beta<\alpha$ and any $n<\omega$,
 there is a $\delta$ in $(t'_{\beta\cdot \omega+n},t'_{\beta\cdot
   \omega+n+1}]$ such that $\forall m\geq n \; \varphi_1(f,m,\delta)$
 holds, so ordinals with this property occur unboundedly below
 $t_{\beta+1}=t'_{(\beta+1)\cdot\omega}$.  In particular, $\forall
 m\geq n \; \varphi_1(f,m,t_{(\beta+1)\cdot\omega})$ and similarly for
 $\varphi_2$, so the sequence $t$ witnesses the lemma.
\end{proof}

We will often want to show that a property $\ph(f,\delta)$ holds for
sufficiently many $\delta$ by decomposing $f$ into $E(f \mid
\YY_\delta)$ and $f - E(f \mid \YY_\delta)$. We will be able do this
by finding a long sequence such that $E(f \mid \YY_\delta)$ does not
change much over its span, and then dealing with each value, in turn.
The next lemma makes this precise.

\begin{lemma}
	Suppose there is a $\theta<\omega^{p}$ such that for every
$\alpha^{\theta}$-sequence $s$ with $\alpha\geq\omega$ and every $f$ with
$\|f\|_{L^\infty}\leq B$, there are a natural number $n$ and an
$\alpha$-subsequence $t$ of $s$ such that the property
\begin{quote}
for every  $m\geq n$, $\varphi(f,m,\delta)$
\end{quote}
holds for $t$-many $\delta$.

Suppose also that $\varepsilon > 0$ is such that whenever
$\|f-f'\|_{L^2}<\varepsilon$ and $\varphi(f,m,\delta)$ holds, also
$\varphi'(f',m,\delta)$.  Let $\varphi$ be continuous in $\delta$.
Then there is a $\theta<\omega^{2p-1}$ such that for every
$\alpha^\theta$-sequence $s$ with $\alpha\geq\omega$ and every $f$
with $\|f\|_{L^\infty}\leq B$, there are a natural number $n$ and an
$\alpha$-subsequence $t$ of $s$ such that the property
\begin{quote}
for every  $m\geq n$, $\varphi'(E(f\mid\YY_\delta),m,\delta)$ and
$\varphi'(f-E(f\mid\YY_\delta),m,\delta)$
\end{quote}
holds for $t$-many $\delta$.
\label{tech:rel}
\end{lemma}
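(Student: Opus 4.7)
The plan is to split $f$ as $g + (f - g)$, where $g$ is a $\delta$-independent function chosen to approximate $E(f \mid \YY_\delta)$ uniformly over a sufficiently long subsequence, and then to apply the hypothesis to each summand separately. The central obstacle is that $E(f \mid \YY_\delta)$ varies with $\delta$, so the hypothesis (which needs a $\delta$-independent input) cannot be applied to it directly; a preliminary pigeonhole step is needed to freeze $E(f \mid \YY_\delta)$ up to small $L^2$ error.

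First I would use the fact that $\|E(f \mid \YY_\delta)\|_{L^2}^2$ is monotonically non-decreasing in $\delta$ and bounded by $B^2 \mu(X)$. Pick $L > 4 B^2 \mu(X) / \varepsilon^2$. Since $\alpha \geq \omega \geq L$, an initial portion of any $\alpha^{\theta_A + 1}$-sequence can be partitioned into $L$ consecutive $\alpha^{\theta_A}$-subsequences, so the finite factor $L$ is absorbed into the ordinal exponent at the cost of a single extra unit. At least one such subsequence $s_A$ must have its endpoint values of $\|E(f \mid \YY_\delta)\|_{L^2}^2$ within $\varepsilon^2/4$ of each other. Setting $g := E(f \mid \YY_{s_{A,0}})$, orthogonality of increments for projections onto the nested $\YY_\delta$ gives $\|E(f \mid \YY_\delta) - g\|_{L^2} < \varepsilon / 2$ throughout $\mathrm{span}(s_A)$, and hence also $\|(f - E(f \mid \YY_\delta)) - (f - g)\|_{L^2} < \varepsilon / 2$. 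Since $\|g\|_\infty \leq B$ and $\|f - g\|_\infty \leq 2B$, the hypothesis will be applied once at bound $B$ and once at bound $2B$.

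Next I would apply the hypothesis twice on $s_A$, once to $g$ and once to $f - g$, combining them on a common $\alpha$-subsequence $t$ by the same technique as in the proof of Lemma~\ref{tech:pair}: divide $s_A$ into sub-blocks long enough for a single application, use the hypothesis on each block to obtain an $\alpha$-subsequence where $\varphi(g, m, \delta)$ holds, then apply the ``every $\omega$-th element'' thinning, invoking the continuity of $\varphi$ in $\delta$ at the resulting suprema, to extract a single $\alpha$-subsequence on which $\varphi(f - g, m, \delta)$ can simultaneously be arranged. Taking $p = q$ in the tech:pair-style bookkeeping bounds the $\theta_A$ required by $\omega^{2p-1}$, and absorbing the single pigeonhole unit keeps the total $\theta$ under $\omega^{2p-1}$. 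For the resulting $t$-many $\delta$, the approximation hypothesis upgrades $\varphi(g, m, \delta)$ to $\varphi'(E(f \mid \YY_\delta), m, \delta)$ and $\varphi(f - g, m, \delta)$ to $\varphi'(f - E(f \mid \YY_\delta), m, \delta)$, as required. The main obstacle throughout is the coordinated ordinal bookkeeping: the pigeonhole step, the two applications of the hypothesis, and the final simultaneous thinning each consume ordinal length, and $\alpha \geq \omega$ must be invoked at each stage to absorb the unavoidable finite factors without exceeding the bound $\omega^{2p-1}$.
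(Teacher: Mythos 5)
Your argument is correct and matches the paper's proof in all essentials: a pigeonhole over consecutive blocks freezes $E(f\mid\YY_\delta)$ as a fixed $g=E(f\mid\YY_{s_0})$ up to $\varepsilon$ in $L^2$, the hypothesis is then applied to $g$ and to $f-g$ simultaneously via the technique of Lemma~\ref{tech:pair}, and the closeness assumption upgrades $\varphi$ to $\varphi'$, with the same ordinal bookkeeping (roughly $2\theta^2$ plus one extra exponent unit, staying below $\omega^{2p-1}$). The only difference is cosmetic: you run the pigeonhole over an explicit finite number $L$ of blocks using monotonicity and orthogonality of the projections, where the paper uses $\omega$-many blocks and convergence of $E(f\mid\YY_{s^n_0})$; in both cases the extra factor is absorbed using $\alpha\geq\omega$.
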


\begin{proof}
  Give $\theta$ as in the hypothesis, we claim the conclusion holds of
  $2\theta^2+1$. If $s$ is an $\alpha^{2\theta^2+1}$-sequence, we may
  use the fact that $\alpha\geq\omega$ to divide $s$ into
  $\omega$-many $\alpha^{2\theta^2}$-sequences given by
  $s^n_\delta=s_{\alpha^{2\theta^2}\cdot n+\delta}$.  For some
  $n<\omega$,
  \[
  \|E(f\mid\YY_{s^n_0})-E(f\mid\YY_{s^n_{\alpha^{2\theta^2}}})\|<\varepsilon.
\]
  As in the previous lemma, there is an $\alpha$-subsequence $t$ of
  $s^n$ such that
\begin{quote}
for every $m\geq n$, $\varphi(E(f\mid\YY_{s^n_0}),m,\delta)$ and
$\varphi(f-E(f\mid\YY_{s^n_0}),m,\delta)$
\end{quote}
holds for $t$-many $\delta$, and the conclusion immediately follows.
\end{proof}

Our final technical lemma will give us the means to find many $\delta$ where two
properties are satisfied, where the second depends on a parameter that is chosen
to satisfy the first.

\begin{lemma}
 Suppose there is a $\theta_0<\omega^{p}$ such that for
 every $\alpha^{\theta_0}$-sequence $s$ with $\alpha\geq\omega$ and
 every $f$ with $\|f\|_{L^\infty}\leq B$, there are a natural number
 $n_0$ and an $\alpha$-subsequence $t$ of $s$ such that the property
\begin{quote}
for every $m\geq n_0$, $\varphi_0(f,m,\delta)$
\end{quote}
holds for $t$-many $\delta$.

Suppose that, additionally, for every $d$ there is a $\theta_d<\omega^{q}$
such that for every $\alpha^{\theta_d}$-sequence $s$ with
$\alpha\geq\omega$ and every $f$ with $\|f\|_{L^\infty}\leq B$, there
is a natural number $n_d$ and an $\alpha$-subsequence $t$ of $s$ with
the property
\begin{quote}
for every  $m\geq n_d$, $\varphi_d(f,m,\delta)$
\end{quote}
holds for $t$-many $\delta$.

If $\varphi_i$ is continuous in $\delta$ for each $i$ then there is a
$\theta<\omega^{p+q}$ such that for every $\alpha^\theta$-sequence $s$ with
$\alpha\geq\omega$ and every $f$ with $\|f\|_{L^\infty}\leq B$, there
are an $n$, an $N$, and an $\alpha$-subsequence $t$ of $s$ such that
the property
\begin{quote}
$\varphi_0(f,N,\delta)$ and for every $m\geq n$, $\varphi_N(f,m,\delta)$
\end{quote}
holds for $t$-many $\delta$.
\label{tech:nested}
\end{lemma}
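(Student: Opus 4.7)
My plan is to adapt the strategy of Lemma~\ref{tech:pair}, accommodating the fact that the second hypothesis depends on $N$, which is determined only after applying the first. I take $\theta = \omega^q \cdot (\theta_0 + 1)$ (or a similar composition with a bit of padding), which is less than $\omega^p \cdot \omega^q = \omega^{p+q}$ because $\theta_0 + 1 \leq \omega^p$.

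Given an $\alpha^\theta$-sequence $s$ with $\alpha \geq \omega$ and an $f$ with $\|f\|_\infty \leq B$, I would proceed in two stages. First, thin $s$ to an $\alpha^{\theta_0}$-subsequence $s^*$ by picking every $\alpha^{\omega^q}$-th element, so that consecutive elements of $s^*$ in $s$ are separated by $\alpha^{\omega^q}$-long intervals. Applying the first hypothesis to $s^*$ yields a natural number $N = n_0$ and an $\alpha$-subsequence $t^*$ of $s^*$ such that $\varphi_0(f, m, \delta)$ holds for all $m \geq N$ and $t^*$-many $\delta$; in particular $\varphi_0(f, N, \delta)$ holds for $t^*$-many $\delta$. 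Second, now that $N$ is fixed (and so $\theta_N < \omega^q$ is determined), each gap $(t^*_\beta, t^*_{\beta+1}]$ still contains an $\alpha^{\omega^q}$-long sub-interval of $s$ by construction, hence an $\alpha^{\theta_N}$-subsequence $w^\beta$. Applying the hypothesis for $\varphi_N$ to each $w^\beta$ yields a common $n = n_N$ (depending only on $N$ and the norm bound) and an $\alpha$-subsequence $u^\beta$ of $w^\beta$ such that for $u^\beta$-many $\delta$, $\varphi_N(f, m, \delta)$ holds for all $m \geq n$.

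To build the final $\alpha$-subsequence $t$ on which both properties hold at $t$-many $\delta$, I would follow the coarsening-and-continuity trick at the end of the proof of Lemma~\ref{tech:pair}. Specifically, I would arrange Stage 1 so that $t^*$ has order type $\alpha \cdot \omega$ (by using a slightly larger $s^*$, which the choice of $\theta$ accommodates), then coarsen by setting $t_\beta = t^*_{\omega \cdot \beta}$ for $\beta \leq \alpha$. In each coarsened gap $(t_\beta, t_{\beta+1}]$, both $\varphi_0$-good $\delta$'s (one per $t^*$-subgap, hence unbounded below $t_{\beta+1}$) and $\varphi_N$-good $\delta$'s (from the $u^\beta$'s in each $t^*$-subgap, also unbounded below $t_{\beta+1}$) accumulate; continuity of $\varphi_0$ and $\varphi_N$ in $\delta$ then transfers both properties to the supremum, which equals $t_{\beta+1}$, giving a single $\delta$ where both hold.

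The main obstacle is exactly this synchronization: Stages 1 and 2 each produce their own ``good'' $\delta$'s independently, and a priori there is no reason for a common $\delta$ to exist in each gap. The coarsening-and-continuity device is what resolves this, since a single supremum can simultaneously absorb both unbounded families of good points. The remaining technical work is ordinal bookkeeping to verify that $\theta < \omega^{p+q}$ and that the initial sequence is long enough to yield both an $\alpha \cdot \omega$-length $\alpha^{\theta_0}$-subsequence for Stage 1 and $\alpha^{\omega^q}$-long buffer intervals for Stage 2.
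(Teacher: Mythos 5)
The key step of your Stage~2 is not justified by the hypothesis. When you apply the assumption for $\varphi_N$ separately to each of the $\alpha$-many blocks $w^\beta$, you assert that this yields ``a common $n=n_N$ (depending only on $N$ and the norm bound)''. But the second hypothesis has the quantifier order $\forall d\,\exists\theta_d\,\forall s,f\,\exists n,t$: the threshold $n$ it provides depends on the particular $\alpha^{\theta_d}$-sequence, and no uniformity across different sequences is available (indeed, the absence of such uniformity is the whole reason for the ``$t$-many $\delta$'' formalism and for Theorem~\ref{second:lemma}). With block-dependent thresholds $n_\beta$ your construction breaks down at exactly the synchronization point you identify: within a single coarsened gap you must merge witnesses coming from $\omega$-many different blocks, each satisfying $\varphi_N(f,m,\cdot)$ only for $m\geq n_\beta$, and if the $n_\beta$ are unbounded there is no fixed finite $n$ for which continuity can transfer ``for every $m\geq n$, $\varphi_N(f,m,\delta)$'' to the supremum --- nor any single $n$ to place in the conclusion of the lemma, which requires one $n$ working for $t$-many $\delta$ across all gaps.

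The paper's proof avoids this by invoking the second hypothesis only once, globally, using the fact that the base $\alpha$ in the hypotheses is universally quantified. One views the given sequence as an $(\alpha')^{\theta_0}$-sequence with $\alpha'=\alpha^{2\cdot\sup_{d>0}\theta_d}$ and applies the first hypothesis with base $\alpha'$, obtaining $N$ and an $\alpha'$-subsequence $s'$ such that $\varphi_0(f,N,\delta)$ holds for $s'$-many $\delta$; this property is inherited by every further subsequence of $s'$, since each gap of a subsequence contains a full gap of $s'$. One then applies the hypothesis for $\varphi_N$ a single time to $s'$, viewed as an $(\alpha^2)^{\theta_N}$-sequence, producing one $n$ and an $\alpha^2$-subsequence $s''$ on which both properties hold for $s''$-many $\delta$, and finally coarsens by taking every $\omega$-th term and uses continuity, exactly as in Lemma~\ref{tech:pair}, to find $t$-many $\delta$ where both hold simultaneously. (A smaller slip: taking ``every $\alpha^{\omega^q}$-th element'' of an $\alpha^{\omega^q\cdot(\theta_0+1)}$-sequence yields a subsequence of order type $\alpha^{\omega^q\cdot\theta_0}$, not $\alpha^{\theta_0}$; that is repairable bookkeeping, but the missing uniformity of $n$ is not.)
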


\begin{proof}
 Let $\theta$ be $2\cdot(\sup_{d>0} \theta_d)\cdot \theta$, and let
 $s$, $f$ be given.  By the first assumption, there is an
 $\alpha^{2\cdot\sup_{d>0}\theta_d}$-subsequence of $s$, $s'$, and an
 $N$ such that $\varphi_0(f,N,\delta)$ holds for $s'$-many $\delta$.
 Then there are an $\alpha^2$-subsequence $s''$ and an $n$ such that
 both $\varphi_0(f,N,\delta)$ holds for $s''$ many $\delta$, and for
 each $m\geq n$, $\varphi_N(f,m,\delta)$ also holds for $s''$-many
 $\delta$.  Since $\alpha\geq\omega$, we may apply the method of
 Lemma \ref{tech:pair} to obtain an $\alpha$-subsequence $t$ such
 that the properties hold simultaneously for $t$-many $\delta$.
\end{proof}

Recall that if $\XX$ is a measure-preserving system and $\YY$ is a
factor, $\XX \times_\YY \XX$ is again a measure-preserving system with
factor $\YY$. The space $L^2(\XX \times_\YY \XX)$ and some of its
properties were described in Section~\ref{preliminaries:section}. The
operation of taking the relative square over $\YY$ can be iterated:
for each $r$ and $\delta$, we define the space $\XX^{[r]}_\delta$ by
induction on $r$, by setting $\XX^{[0]}_\delta$ equal to $\XX$, and
$\XX^{[r+1]}_\delta$ equal to $\XX^{[r]}_\delta\times_{\YY_\delta}
\XX^{[r]}_\delta$.

Each space $L^2(\XX^{[r]}_\delta)$ can be represented as described in
Section~\ref{preliminaries:section}. In particular,
$L^\infty(\YY_\delta)$ can be identified as a subset of
$L^2(\XX^{[r]}_\delta)$, and if $f$ and $g$ are elements of
$L^\infty(\XX^{[r]}_\delta)$, then $f \otimes g$ is an element of
$L^\infty(\XX^{[r+1]}_\delta)$. Thus the most basic elements of
$L^\infty(\XX^{[r]}_\delta)$ can be viewed as $2^r$-fold tensor
products of elements of $L^\infty(\XX)$. We define the \emph{simple}
elements of $L^\infty(\XX^{[r]}_\delta)$ to be those that can be
represented as finite sums of such basic elements.

The advantage to focusing on simple elements is that if $f$ is such an
element, then $f$ can be viewed as an element of
$L^\infty(\XX^{[r]}_\delta)$ for each $\delta$, simultaneously. More
precisely, for each $r$, we define $L_0^\infty(r)$ to be the set of
finite formal sums of such basic elements; then each element $f$ of
$L_0^\infty(r)$ denotes an element of $L^\infty(\XX^{[r]}_\delta)$,
for each $\delta$. Note that if $f$ and $g$ are elements of
$L_0^\infty(r)$ and $h$ is an element of $L^\infty(\YY)$, it makes
sense to talk about $f + g$, $h f$, and $E(f \mid \YY)$ as elements of
$L_0^\infty(r)$.  We may define the $L^\infty$ bound of such a formal
sum in the natural way, taking $\|\sum_{i<n}c_if_i\|_{L^\infty}$ to be
$\sum_{i<n}|c_i|\cdot \|f_i\|_{L^\infty}$.  Such a bound is an upper
bound for the true $L^\infty$ bound in $L^\infty(\XX^{[r]}_\delta)$
for any $\delta$, and respects the usual properties of the $L^\infty$
norm with respect to sums and products.

The next lemma shows that for each $r$, we can find many many $\delta$ such that
the space $\XX^{[r]}_\delta$ looks sufficiently weak mixing. 

\begin{lemma}
 For every $\varepsilon>0$, $B>0$, and $r$ there is a $K<\omega$ such that for
 every $\alpha^K$-sequence $s$ with $\alpha\geq\omega$ and every
 $f,g\in L_0^\infty(r)$ with $\|f\|_{L^\infty}\leq
 B,\|g\|_{L^\infty}\leq B$, there are a natural number $n$ and an
 $\alpha$-subsequence $t$ of $s$ such that the property
\begin{quote}
for every $m\geq n$,
\[\frac{1}{m}\sum_{i<m}
 \int\left[E(fT^ig \mid \YY_\delta)-E(f \mid \YY_\delta)T^iE(g \mid
   \YY_\delta) \right]^2 d\mu(\XX^{[r]}_\delta)<\varepsilon\]
\end{quote}
holds for $t$-many $\delta$.
\label{wmdim}
\end{lemma}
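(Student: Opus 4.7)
I would prove the lemma by induction on $r$. The base case $r = 0$ is immediate, since $L_0^\infty(0) = L^\infty(\XX)$ and $\XX^{[0]}_\delta = \XX$, so the statement reduces exactly to Theorem~\ref{wm:theorem}.

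For the inductive step at level $r+1$, my strategy is to repeat the proof of Theorem~\ref{wm:theorem} inside the iterated relative square $\XX^{[r+1]}_\delta = \XX^{[r]}_\delta \times_{\YY_\delta} \XX^{[r]}_\delta$. Setting $h_\delta := f - E(f \mid \YY_\delta)$, the algebraic manipulations at the start of that proof go through in any measure-preserving system and rewrite the averaged squared difference as $\int h_\delta \cdot (H_g^m *_{\YY_\delta} h_\delta)\,d\mu(\XX^{[r+1]}_\delta)$, where $H_g^m = \frac{1}{m}\sum_{i<m} T^i(g \otimes g)$ now lives in $L^2(\XX^{[r+2]}_\delta)$. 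Since Theorem~\ref{met:theorem} is an abstract Hilbert-space statement, applying it in $L^2(\XX^{[r+2]}_\delta)$ produces many $\delta$ at which $H_g^m *_{\YY_\delta} h_\delta$ is uniformly close to its limit $H_g *_{\YY_\delta} h_\delta$, and a separate thinning of the sequence (as at the end of the proof of Theorem~\ref{wm:theorem}) makes $\|E(f \mid \YY_{\delta+1}) - E(f \mid \YY_\delta)\|_2$ small on many $\delta$.

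The essential new ingredient, replacing the appeal to Lemma~\ref{zy:lemma} at $r = 0$, is to show that $\int h_{\delta+1}\cdot(H_g *_{\YY_\delta} h_\delta)\,d\mu$ is small. For this I would expand $f$ and $g$ as formal finite sums of basic tensors $f = \sum_k f_{k,1}\otimes f_{k,2}$ and $g = \sum_l g_{l,1}\otimes g_{l,2}$, propagate $*_{\YY_\delta}$ and $E(\cdot \mid \YY_\delta)$ through the tensor products, and apply the identity $AB - CD = A(B - D) + (A - C)D$ to rewrite the integrand as a finite combination of expressions each bounded in $L^2$ by an average of the form appearing in the lemma at level $r$, applied to pairs $(f_{k,p}, g_{l,p}) \in L_0^\infty(r)^2$. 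The inductive hypothesis controls each of these averages on many $\delta$, and iterated applications of Lemmas~\ref{tech:pair}, \ref{tech:rel}, and \ref{tech:nested} merge all of these many-$\delta$ conditions into a single one on a common thinner $\alpha$-subsequence, at the price of a finite increase in the required starting length. This yields the needed finite $K_{r+1}$.

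The main obstacle is uniformity in the number of basic-tensor summands appearing in the formal representations of $f$ and $g$: the bound $B$ on their formal $L^\infty$-norms does not constrain that number, so an over-literal per-summand expansion risks making $K_{r+1}$ depend on it. My plan to avoid this is to keep the Hilbert-space manipulations and the appeal to the approximate mean ergodic theorem at the level of the single vectors $f$ and $g$ in $L^2(\XX^{[r+1]}_\delta)$, and to expand into basic tensors only when invoking the inductive hypothesis, using bilinearity of the $*_{\YY_\delta}$ pairing together with $L^2$-triangle and Cauchy--Schwarz estimates to reduce the expanded expression to a number of inductive calls controlled by $B$ alone.
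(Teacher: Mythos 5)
Your base case matches the paper's, but your inductive step is organized very differently, and most of its machinery turns out to be idle. The paper never re-runs the weak-mixing argument inside $\XX^{[r+1]}_\delta$: it asserts that it suffices to treat basic tensors $f=f_1\otimes f_2$, $g=g_1\otimes g_2$ with $f_1,f_2,g_1,g_2\in L_0^\infty(r)$, uses Lemma~\ref{tech:rel} (plus subadditivity) to reduce to the cases $E(f_i\mid\YY_\delta)=0$ or $E(f_i\mid\YY_\delta)=f_i$, and then exploits the identity $E(F\otimes G\mid\YY_\delta)=E(F\mid\YY_\delta)E(G\mid\YY_\delta)$: the level-$(r+1)$ average becomes $\frac{1}{m}\sum_{i<m}\int\left[E(f_1T^ig_1\mid\YY_\delta)E(f_2T^ig_2\mid\YY_\delta)\right]^2d\mu(\XX^{[r]}_\delta)$, which, by the inductive hypothesis applied to the two component pairs simultaneously (merged via Lemma~\ref{tech:pair}), is close to the corresponding product of projections, and that product is $0$ because some $E(f_i\mid\YY_\delta)=0$. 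No mean ergodic theorem, no $H_g$, and no comparison of $\YY_\delta$ with $\YY_{\delta+1}$ appears in the inductive step.

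By contrast, in your plan the $H_g$/orthogonality scaffolding does no work at level $r+1$, and the step you call the essential new ingredient is where the whole proof lives --- and, unwound, it \emph{is} the paper's argument. There is no analogue of the containment $H_g *_{\YY_\delta} h_\delta \in \YY_{\delta+1}$ in $\XX^{[r+1]}_\delta$ (the tower is built inside $\XX$, not inside the iterated square, as you note), so the only route to making $\int h_{\delta+1}\cdot(H_g *_{\YY_\delta} h_\delta)\,d\mu$ small is to replace $H_g$ by $H^m_g$ and bound $\frac{1}{m}\sum_{i<m}\int E(h_{\delta+1}T^ig\mid\YY_\delta)E(h_\delta T^ig\mid\YY_\delta)\,d\mu$ by Cauchy--Schwarz, whose decisive factor is $\frac{1}{m}\sum_{i<m}\|E(h_\delta T^ig\mid\YY_\delta)\|_2^2$ --- which is exactly the quantity the lemma asks you to bound. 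If you can control that directly from the level-$r$ hypothesis via tensor factorization (you can, and that is the paper's proof), then the appeal to Theorem~\ref{met:theorem} in $\XX^{[r+2]}_\delta$ and the thinning to make $E(f\mid\YY_{\delta+1})-E(f\mid\YY_\delta)$ small should simply be discarded. Finally, your proposed cure for the summand-count issue is not convincing as stated: the number of component pairs arising from a simple element is not controlled by $B$, and each additional many-$\delta$ condition merged through Lemma~\ref{tech:pair} costs another factor in the ordinal length, so $K$ would end up depending on the number of summands. The paper sidesteps this by proving the statement for basic tensors; your argument should land there as well rather than attempt an unbounded family of inductive calls.
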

\begin{proof}
 By induction on $r$.  When $r=0$, this is simply Lemma
 \ref{wm:theorem}.  Suppose the claim holds for $r$.  It suffices to
 consider the case where $f$ and $g$ in $L_0^\infty(r+1)$ are of the form
$f=f_1\otimes f_2$ and $g=g_1\otimes g_2$, with $f_1, f_2, g_1, g_2$ in
$L_0^\infty(r)$.
 Using Lemma \ref{tech:rel} and the subadditivity of the left hand
 side, it suffices to consider the cases where $E(f_i\mid\YY_\delta)=0$ and
 where $E(f_i\mid\YY_\delta)=f_i$; the case where
 $E(f_i\mid\YY_\delta)=f_i$ for both $i=1$ and $i=2$ is trivial, so
 we may further assume that for some $i\in\{1,2\}$,
 $E(f_i\mid\YY_\delta)=0$.  


 By the inductive hypothesis and Lemma~\ref{tech:pair}, for any
 $\varepsilon' > 0$ we can find $K$ large enough so that every
 $\alpha^K$-sequence $s$ has an $\alpha$-subsequence $t$ such that
\[\frac{1}{m}\sum_{i<m}
 \int\left[
E(f_1T^ig_1\mid\YY_\delta)-E(f_1\mid\YY_\delta)E(T^ig_1\mid\YY_\delta)\right]^2
d\mu(\XX^{[r]}_\delta)<\varepsilon'\]
and
\[\frac{1}{m}\sum_{i<m}
 \int\left[
E(f_2T^ig_2\mid\YY_\delta)-E(f_2\mid\YY_\delta)E(T^ig_2\mid\YY_\delta)\right]^2
d\mu(\XX^{[r]}_\delta)<\varepsilon'\]
for $t$-many $\delta$.  But then, for such $\delta$,
\begin{multline*}
\frac{1}{m}\sum_{i<m}
 \int\left[E((f_1\otimes f_2)(T^ig_1\otimes T^ig_2) \mid \YY_\delta)
 \right]^2 d\mu(\XX^{[r+1]}_\delta)= \\
\frac{1}{m}\sum_{i<m}
 \int\left[E(f_1T^ig_1\mid\YY_\delta)E(f_2T^ig_2\mid\YY_\delta)\right]^2
d\mu(\XX^{[r]}_\delta)
\end{multline*}
is close to
\[
\frac{1}{m}\sum_{i<m}
\int\left[
E(f_1\mid\YY_\delta)T^iE(g_1\mid\YY_\delta)E(f_2\mid\YY_\delta)T^i
E(g_2\mid\YY_\delta)\right]^2
d\mu(\XX^{[r]}_\delta),
\] which is $0$ since either
$E(f_1\mid\YY_\delta)=0$ or $E(f_1\mid\YY_\delta)=0$.
\end{proof}

From this point on, our proof follows that of \cite[Theorem
8.3]{furstenberg:et:al:82} very closely.

\begin{lemma}
 Suppose that for every $\varepsilon>0$, $B>0$, $k$, and $r$ there is a
 $\theta<\omega^{p}$ such that for every $\alpha^\theta$-sequence $s$
 with $\alpha\geq\omega$ and every $f_0,\ldots,f_{k-1}$ in
 $L_0^\infty(r)$ with $\|f_i\|_{L^\infty}\leq B$, there are a natural
 number $n$ and an $\alpha$-subsequence $t$ of $s$ such that the
 property
\begin{quote}
for every $m\geq n$,
\[\frac{1}{m}\sum_{i<m}
 \int\left(E(\prod_{l=0}^{k-1}
T^{li}f_l\mid\YY_\delta)-\prod_{l=0}^{k-1}
T^{li}E(f_l\mid\YY_\delta)\right)^2
d\mu(\XX^{[r]}_\delta)<\varepsilon\]
\end{quote}
holds for $t$-many $\delta$.

 Then for every $\varepsilon>0,B>0,k,r$ there is a $\theta<\omega^{kp+k-1}$
such that for every $\alpha^\theta$-sequence $s$ with $\alpha\geq\omega$
and every $f_1,\ldots,f_k$ with $\|f_i\|_{L^\infty}\leq B$, there are
a natural number $n$ and an $\alpha$-subsequence $t$ of $s$ such that
the property
\begin{quote}
for every $m\geq n$,
\[\|\frac{1}{m}\sum_{i<m}\left(\prod_{l=1}^k T^{li}f_l-\prod_{l=1}^k
T^{li}E(f_l\mid\YY_\delta)\right)\|_{L^2(\XX^{[r]}_\delta)}<\varepsilon\]
\end{quote}
holds for $t$-many $\delta$.
\label{wmstrong}
\end{lemma}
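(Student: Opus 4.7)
The plan is to adapt the classical Furstenberg--Katznelson derivation (\cite[Theorem 8.3]{furstenberg:et:al:82}) of the ``characteristic factor in $L^2$-norm'' result from the ``weak mixing of all orders in the $E$-sense'' hypothesis, via van der Corput's inequality, bookkeeping the metastable quantifiers through Lemmas~\ref{tech:pair}, \ref{tech:rel}, and \ref{tech:nested}.

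First I would telescope
\[
\prod_{l=1}^k T^{li} f_l - \prod_{l=1}^k T^{li} E(f_l\mid\YY_\delta) = \sum_{l_0=1}^k \prod_{l<l_0} T^{li}E(f_l\mid\YY_\delta) \cdot T^{l_0 i}\bigl(f_{l_0}-E(f_{l_0}\mid\YY_\delta)\bigr) \cdot \prod_{l>l_0} T^{li}f_l,
\]
and use Lemma~\ref{tech:pair} iteratively to combine the $k$ summands, reducing to a single value of $l_0$. Write $h_\delta = f_{l_0} - E(f_{l_0}\mid\YY_\delta)$ (so $E(h_\delta\mid\YY_\delta)=0$), and let $G_i^\delta$ denote the $l_0$-th summand at index $i$, satisfying $\|G_i^\delta\|_{L^\infty(\XX^{[r]}_\delta)}\leq B^k$. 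Van der Corput's inequality in $L^2(\XX^{[r]}_\delta)$ then gives
\[
\Bigl\|\tfrac{1}{m}\sum_{i<m} G_i^\delta\Bigr\|^2_{L^2(\XX^{[r]}_\delta)} \leq \frac{2B^{2k}}{M+1} + \frac{2}{M+1}\sum_{j=1}^M \Bigl|\tfrac{1}{m}\sum_{i<m} \langle G_i^\delta, G_{i+j}^\delta\rangle\Bigr|.
\]
Choosing $M$ first makes the initial term small; Lemma~\ref{tech:nested} will handle the resulting nested quantifier structure (first choose $M$, then apply the hypothesis uniformly in $j\leq M$).

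For each fixed $j$, $G_i^\delta \cdot G_{i+j}^\delta = \prod_l T^{li} F_l^{(j,\delta)}$, where $F_l^{(j,\delta)}$ is one of $E(f_l\mid\YY_\delta)\cdot T^{lj}E(f_l\mid\YY_\delta)$, $h_\delta\cdot T^{l_0 j}h_\delta$, or $f_l\cdot T^{lj}f_l$ (according as $l<l_0$, $l=l_0$, or $l>l_0$) -- in each case a simple element of $L_0^\infty(r)$ bounded by $B^2$. Applying the hypothesis (at level $r$, with the same $k$ and a suitably smaller $\varepsilon'$) yields, for $t$-many $\delta$, that the average of $\int \prod_l T^{li}F_l^{(j,\delta)}\,d\mu$ over $i$ is close to the average of $\int \prod_l T^{li} E(F_l^{(j,\delta)}\mid\YY_\delta)\,d\mu$. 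The crucial factor $E(F_{l_0}^{(j,\delta)}\mid\YY_\delta) = E(h_\delta\cdot T^{l_0 j}h_\delta\mid\YY_\delta)$ has small squared $L^2$-norm on average over $j\leq M$, by Theorem~\ref{wm:theorem} applied to $h_\delta$ (exploiting $E(h_\delta\mid\YY_\delta)=0$), while the remaining factors are uniformly bounded; this collapses the whole van der Corput sum to something arbitrarily small.

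The main obstacle is the combinatorial bookkeeping: every step -- telescoping, van der Corput, and each invocation of the hypothesis -- must be compatible with the ``for $t$-many $\delta$'' framework and must respect the ordinal accounting dictated by the three technical lemmas. Lemma~\ref{tech:rel} mediates the splittings $f_{l_0} = E(f_{l_0}\mid\YY_\delta) + h_\delta$ introduced by the telescoping; Lemma~\ref{tech:nested} handles the nested choice of $M$ followed by the hypothesis application for each $j\leq M$; and Lemma~\ref{tech:pair} glues the $k$ telescoped summands. A careful count of these applications yields the claimed exponent $\theta<\omega^{kp+k-1}$, with the analytic content (the telescoping, van der Corput, and the pair weak-mixing bound) otherwise being essentially routine.
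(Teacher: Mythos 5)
Your proposal follows essentially the same route as the paper's proof: telescoping plus Lemma~\ref{tech:pair} to isolate a single factor with zero conditional expectation, then a van der Corput argument (the paper phrases it as averaging over windows of length $H$ and expanding the square, which is the same estimate), applying the hypothesis to the shifted products $f_l T^{lj} f_l$ and the relative weak mixing bound of Lemma~\ref{wmdim} (rather than Theorem~\ref{wm:theorem}, which is only the $r=0$ case) to the zero-expectation factor, with Lemma~\ref{tech:nested} managing the nested choice of the window length and of $n$. This matches the paper's argument in all essentials; only the ordinal bookkeeping, which you assert rather than carry out, is left implicit.
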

\begin{proof}
Under the additional assumption that for some $l_0$,
$E(f_{l_0}\mid\YY_\delta)=0$, we will prove the claim with
$\theta<\omega^{p+1}$.  Since
\[\prod_{l=1}^k T^{li}f_l-\prod_{l=1}^k
T^{li}E(f_l\mid\YY_\delta)=\sum_{j=1}^k\left(\prod_{l=1}^{j-1}T^{li}f_l\right)T^
{ji}\left(f_j-E(f_j\mid\YY_\delta)\right)\left(\prod_{j+1}^k
T^{li}E(f_l\mid\YY_\delta)\right),\]
we will then be able to apply Lemma \ref{tech:pair} $k-1$ times to
obtain the full result with the stated bound.

So assume that $E(f_{l_0}\mid\YY_\delta)=0$.  By Lemma \ref{wmdim},
Lemma \ref{tech:nested}, and the assumption, we may choose a
$\theta<\omega^{p+1}$ so that for every $\alpha^\theta$-sequence $s$
and every $f_1,\ldots,f_k$ with $\|f_i\|_{L^\infty}\leq 1$, there are
natural numbers $N$ and $H$ and an $\alpha$-subsequence $t$ of $s$
such that for some $\varepsilon > 0$, chosen small enough for the argument
below, the property
\begin{quote}
\[
\frac{1}{H}\sum_{r=1-H}^{H-1}\int\left[E(f_{l_0}T^{l_0r}f_{l_0}
\mid\YY_\delta)-E(f_{l_0}\mid\YY_\delta)T^{l_0r}E(f_{l_0}\mid\YY_\delta)\right]
^2d\mu(\XX^{[r]}_\delta)<\varepsilon/k
\]
and for every $m\geq N$ and $|r|<H$,
\[
\frac{1}{m}\sum_{i<m}\int\left[E(\prod_{l=1}^kT^{(l-1)i}f_lT^{lr}
f_l\mid\YY_\delta)-\prod_{l=1}^kT^{(l-1)i}E(f_lT^{lr}f_l\mid\YY_\delta)\right]
^2d\mu(\XX^{[r]}_\delta)<\varepsilon/k
\]
\end{quote}
holds for $t$-many $\delta$.  It will suffice to argue that these two
properties, at any $\delta$, imply that for some $n$,
\[
\left\| \frac{1}{m}\sum_{i<m}\left(\prod_{l=1}^k T^{li}f_l-\prod_{l=1}^k
T^{li}E(f_l\mid\YY_\delta)\right) \right\|_{L^2(\XX^{[r]}_\delta)}<\varepsilon.
\]

The necessary $n$ is $\max\{N,cH\}$ for some large constant $c$
depending on $\varepsilon$.  Let $m\geq n$ be given.  Then, since $m$ is
much larger than $H$, it suffices to show that the properties above
imply
\[\left\|\frac{1}{m}\sum_{i<m}\frac{1}{H}\sum_{h=i}^{i+H-1}\prod_{l=1}^k
T^{lh}f_l\right\|\]
is small.  By the convexity of $x^2$, it suffices to show that
\[\frac{1}{m}\sum_{i<m}\int\left(\frac{1}{H}\sum_{h=i}^{i+H-1}\prod_{l=1}^k
T^{lh}f_l\right)^2d\mu(\XX^{[r]}_\delta)\]
is small.  Expanding, this is bounded by
\[\frac{1}{m}\sum_{i<m}\frac{1}{H^2}\sum_{h,h'=i}^{i+H-1}\int
\prod_{l=1}^k T^{lh}f_lT^{lh'}f_l d\mu(\XX^{[r]}_\delta).\]
But this may be rewritten as
\[\frac{1}{H}\sum_{r=1-H}^{H-1}\left(1-\frac{|r|}{H}\right)\left[\frac{1}{m}
\sum_{i<m}\int\prod_{l=1}^k
T^{(l-1)i}(f_lT^{lr}f_l)\right] d\mu(\XX^{[r]}_\delta).\]
Since we have chosen $m\geq N$, this is close to
\[\frac{1}{H}\sum_{r=1-H}^{H-1}\left(1-\frac{|r|}{H}\right)\left[\frac{1}{m}
\sum_{i<m}\int\prod_{l=1}^k
T^{(l-1)i}E(f_lT^{lr}f_l\mid\YY_\delta)
d\mu(\XX^{[r]}_\delta)\right]\]
which is bounded by
\[\frac{1}{H}\sum_{r=1-H}^{H-1} \left(1-\frac{|r|}{H}\right)
\|E(f_{l_0}T^{l_0r}f_{l_0}\mid\YY_\delta)\|_{L^2(\XX^{[r]}_\delta)}\prod_{l\neq
l_0}\|f_l\|^2_{\infty}.\]
But we have chosen $H$ large enough that
$\|E(f_{l_0}T^{l_0r}f_{l_0}\mid\YY_\delta)\|$ is close to $0$ for
almost every $r$, and since the terms are bounded by $\prod_l
\|f_l\|_{\infty}^2$, the average is small as well.
\end{proof}

\begin{lemma}
     Suppose that for every $\varepsilon>0,B>0,q,k$, and $r$, there is a
$\theta<\omega^{p}$ such that for every $\alpha^\theta$-sequence $s$ with
$\alpha\geq\omega$ and every $f_1,\ldots,f_k$ in
$L_0^\infty(2^{r+1})$ with $\|f_l\|_{L^\infty}\leq B$ for each
$l\leq k$, there are a natural number $n$ and an $\alpha$-subsequence
$t$ of $s$ such that the property
\begin{quote}
for every $m\geq n$,
\[\left\|\frac{1}{m}\sum_{i<m}\left(\prod_{l=1}^k T^{li}f_l-\prod_{l=1}^k
T^{li}E(f_l\mid\YY_\delta)\right)\right\|_{L^2(\XX^{[r+1]}_\delta)}<\varepsilon
\]
\end{quote}
holds for $t$-many $\delta$.

Further, suppose that for every $\varepsilon>0, B>0,q,k$ and $r$,
there is a $\theta<\omega^{q}$ such that for every
$\alpha^\theta$-sequence $s$ with $\alpha\geq\omega$ and every
$f_0,\ldots,f_{k-1}$ in $L_0^\infty(2^{r})$ with
$\|f_l\|_{L^\infty}\leq B$ for each $l\leq k$, there are a natural
number $n$ and an $\alpha$-subsequence $t$ of $s$ such that the
property
\begin{quote}
for every $m\geq n$,
\[\frac{1}{m}\sum_{i<m}
 \int\left(E(\prod_{l=0}^{k-1}
T^{li}f_l\mid\YY_\delta)-\prod_{l=0}^{k-1}
T^{li}E(f_l\mid\YY_\delta)\right)^2
d\mu(\XX^{[r]}_\delta)<\varepsilon\]
\end{quote}
holds for $t$-many $\delta$.

Then for every $\varepsilon>0,B>0,q,k$, and $r$, there is a
$\theta<\omega^{p+q-1}$ such that for every $\alpha^\theta$-sequence
$s$ with $\alpha\geq\omega$ and every $f_0,\ldots,f_k$ in
$L_0^\infty(2^{r})$ with $\|f_l\|_{L^\infty}\leq B$ for each $l\leq
k$, there are a natural number $n$ and an $\alpha$-subsequence $t$ of
$s$ such that the property
\begin{quote}
for every $m\geq n$,
\[\frac{1}{m}\sum_{i<m}
 \int\left(E(\prod_{l=0}^k T^{li}f_l\mid\YY_\delta)-\prod_{l=0}^k
T^{li}E(f_l\mid\YY_\delta)\right)^2
d\mu(\XX^{[r]}_\delta)<\varepsilon\]
\end{quote}
holds for $t$-many $\delta$.
\label{wmweak}
\end{lemma}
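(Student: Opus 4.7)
The plan is to reduce the $(k+1)$-fold $E$-bound to the two hypotheses by splitting $f_0$ pointwise as $f_0 = E(f_0\mid\YY_\delta) + h_0$ with $h_0 = f_0 - E(f_0\mid\YY_\delta)$ and $E(h_0\mid\YY_\delta)=0$. This rewrites $E(\prod_{l=0}^k T^{li}f_l\mid\YY_\delta) - \prod_{l=0}^k T^{li}E(f_l\mid\YY_\delta)$ as
\[
E(h_0\prod_{l=1}^k T^{li}f_l\mid\YY_\delta) + E(f_0\mid\YY_\delta)\cdot\left[E(\prod_{l=1}^k T^{li}f_l\mid\YY_\delta) - \prod_{l=1}^k T^{li}E(f_l\mid\YY_\delta)\right].
\]
Applying $(a+b)^2 \leq 2a^2 + 2b^2$, integrating over $\XX^{[r]}_\delta$, and averaging over $i$ splits the quantity of interest into two pieces that I will control separately via the two hypotheses and combine using Lemma~\ref{tech:pair}.

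For the first piece, $\frac{1}{m}\sum_i\int E(h_0\prod_{l=1}^k T^{li}f_l\mid\YY_\delta)^2\,d\mu$, the tensor identity $\int E(F\mid\YY_\delta)^2\,d\mu(\XX^{[r]}_\delta) = \int F\otimes F\,d\mu(\XX^{[r+1]}_\delta)$ with $F = h_0\prod_{l=1}^k T^{li}f_l$ rewrites it as $\int (h_0\otimes h_0)\cdot\frac{1}{m}\sum_{i<m}\prod_{l=1}^k T^{li}(f_l\otimes f_l)\,d\mu(\XX^{[r+1]}_\delta)$. Applying the first hypothesis to the $k$-tuple $(f_1\otimes f_1,\dots,f_k\otimes f_k)$ in $L_0^\infty(r+1)$, with $L^\infty$-bound $B^2$, shows that for sufficiently many $\delta$ and large $m$ this $k$-fold average is $L^2(\XX^{[r+1]}_\delta)$-close to $\frac{1}{m}\sum_i\prod_{l=1}^k T^{li}E(f_l\mid\YY_\delta)^2$. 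Since the latter is $\YY_\delta$-measurable and $E(h_0\mid\YY_\delta) = 0$, its integral against $h_0\otimes h_0$ vanishes; Cauchy--Schwarz finishes the bound on this piece.

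For the second piece, pointwise bounding $|E(f_0\mid\YY_\delta)| \leq B$ reduces matters to bounding $\frac{1}{m}\sum_i\int [E(\prod_{l=1}^k T^{li}f_l\mid\YY_\delta)-\prod_{l=1}^k T^{li}E(f_l\mid\YY_\delta)]^2\,d\mu$. Reindexing $l \mapsto l-1$ via the $T$-invariance of $\mu$ and $\YY_\delta$---applying $T^{-i}$ under the integrand and using $TE(g\mid\YY_\delta)=E(Tg\mid\YY_\delta)$---converts this to the form controlled by the second hypothesis applied to $g_l := f_{l+1}$ for $l = 0,\dots,k-1$. Lemma~\ref{tech:pair} then combines the two properties, which occur at ordinals $\omega^p$ and $\omega^q$ respectively, into one at ordinal $\omega^{p+q-1}$, matching the claim.

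The main obstacle will be verifying that both properties are continuous in $\delta$, as required by Lemma~\ref{tech:pair}; strict inequalities $<\varepsilon$ need not pass to suprema $\sup_i\delta_i$, but restating each bound as $\leq \varepsilon/2$ is sufficient and is compatible with the strong $L^2$-convergence of $E(\cdot\mid\YY_{\delta_i})$ to $E(\cdot\mid\YY_{\sup_i\delta_i})$. A secondary bookkeeping matter is threading through the squared $L^\infty$-bound $B^2$ when invoking the first hypothesis on the tensor functions, but this does not affect the ordinal bound.
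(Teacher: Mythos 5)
Your argument is correct, and its computational core is exactly the paper's: the ``fluctuation'' piece involving $h_0=f_0-E(f_0\mid\YY_\delta)$ is handled by passing to the tensor square, invoking the level-$(r+1)$ norm hypothesis on the $\delta$-independent tuple $(f_1\otimes f_1,\dots,f_k\otimes f_k)$ (with bound $B^2$), and killing the $\YY_\delta$-measurable approximant by orthogonality, while the ``projection'' piece is handled by pulling out $\|E(f_0\mid\YY_\delta)\|_\infty\leq B$ and shifting by $T^{-i}$ so that the second hypothesis applies to $(f_1,\dots,f_k)$. The only organizational difference is how the two pieces are combined: the paper reduces to the cases $E(f_0\mid\YY_\delta)=0$ and $E(f_0\mid\YY_\delta)=f_0$ via Lemma~\ref{tech:rel} together with subadditivity, whereas you split $f_0$ exactly at each $\delta$ and then need only Lemma~\ref{tech:pair}. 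This works because, as you note implicitly, neither hypothesis is ever applied to a $\delta$-dependent function---$h_0$ and $E(f_0\mid\YY_\delta)$ enter only through orthogonality and an $L^\infty$ bound---so the approximate-fixing device of Lemma~\ref{tech:rel} is not needed here; your route in fact delivers the stated exponent $p+q-1$ from a single application of Lemma~\ref{tech:pair}, which is if anything tidier bookkeeping than the paper's. Your closing remarks are also apt: continuity in $\delta$ should be arranged by stating the properties with non-strict inequalities (as the paper does in its model example of $\varphi$), and the $B^2$ bound on the tensors is harmless.
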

\begin{proof}
 Once again, we apply Lemma \ref{tech:rel} and subadditivity to
reduce to the two cases where $E(f_0\mid\YY_\delta)=0$ and where
$E(f_0\mid\YY_\delta)=f_0$.

In the former case, we may use the first hypothesis to choose
witnesses so that
\[\left\|\frac{1}{m}\sum_{i<m}\left(\prod_{l=1}^k T^{li}f_l-\prod_{l=1}^k
T^{li}E(f_l\mid\YY_\delta)\right)\right\|_{L^2(\XX^{[r+1]}_\delta)}
<\varepsilon/2.\]
Then it suffices to show
\[\int f_0\otimes f_0 \frac{1}{m}\sum_{i<m} \prod_{l=1}^k
T^{li}(f_l\otimes f_l) d\mu(\XX^{[r+1]}_\delta)<\varepsilon.\]
But by the choice of witnesses, the left hand side is within $\varepsilon$ of
\[\int f_0\otimes f_0 \frac{1}{m}\sum_{i<m}\prod_{l=1}^k
T^{li}E(f_l\mid\YY_\delta) d\mu(\XX^{[r+1]}_\delta)\]
and since $E(\frac{1}{m}\sum_{i<m}\prod_{l=1}^k
T^{li}E(f_l\mid\YY_\delta)\mid\YY_\delta)=\frac{1}{m}\sum_{i<m}\prod_{l=1}^k
T^{li}E(f_l\mid\YY_\delta)$ and $E(f_0\mid\YY_\delta)=0$, it follows
that this expression is $0$.

In the latter case, we may use the second hypothesis to choose witnesses so that
\[\frac{1}{m}\sum_{i<m}
 \int\left(E(\prod_{l=0}^{k-1}
T^{li}f_{l+1}\mid\YY_\delta)-\prod_{l=0}^{k-1}
T^{li}E(f_{l+1}\mid\YY_\delta)\right)^2
d\mu(\XX^{[r]}_\delta)<\varepsilon.\]
Then the left hand side of the desired conclusion is bounded by
\[\|f_0\|_{L^\infty}^2\frac{1}{m}\sum_{i<m}\int
\left(E(\prod_{l=1}^{k} T^{li}f_{l}\mid\YY_\delta)-\prod_{l=1}^{k}
T^{li}E(f_{l}\mid\YY_\delta)\right)^2 d\mu(\XX^{[r]}_\delta)\]
and shifting each term by $T^{-li}$, this is equal to
\[\|f_0\|_{L^\infty}^2\frac{1}{m}\sum_{i<m}\int
\left(E(\prod_{l=0}^{k-1}
T^{li}f_{l+1}\mid\YY_\delta)-\prod_{l=0}^{k-1}
T^{li}E(f_{l+1}\mid\YY_\delta)\right)^2 d\mu(\XX^{[r]}_\delta)\]
which is less than $\varepsilon$.
\end{proof}

\begin{lemma}
\label{final:lemma}
\begin{enumerate}
\item \label{conc:strong} For every $\varepsilon>0,B>0$, and $k$,
  there is a $\theta<\omega^{k^{2k}}$ such that for every
  $\alpha^\theta$-sequence $s$ with $\alpha\geq\omega$ and every
  $f_0,\ldots,f_{k}$ in $L^\infty(\XX)$ with $\|f_l\|_{L^\infty}\leq
  B$ for each $l\leq k$, there are a natural number $n$ and an
  $\alpha$-subsequence $t$ of $s$ such that the property
\begin{quote}
for every $m\geq n$,
\[\frac{1}{m}\sum_{i<m}
 \int\left(E(\prod_{l=0}^{k} T^{li}f_l\mid\YY_\delta)-\prod_{l=0}^{k}
T^{li}E(f_l\mid\YY_\delta)\right)^2 d\mu(\XX)<\varepsilon\]
\end{quote}
holds for $t$-many $\delta$.
\item For every $\varepsilon>0,B>0$, and $k$, there is a
  $\theta<\omega^{k^{2k-1}}$ such that for every
  $\alpha^\theta$-sequence $s$ with $\alpha\geq\omega$ and every
  $f_1,\ldots,f_{k}\in L^\infty(\XX^{2^{r}})$ with
  $\|f_l\|_{L^\infty}\leq B$ for each $l\leq k$, there are a natural
  number $n$ and an $\alpha$-subsequence $t$ of $s$ such that the
  property
\begin{quote}
for every $m\geq n$,
\[\left\|
 \frac{1}{m}\sum_{i<m}
 \left(\prod_{l=1}^k T^{li} f_l - \prod_{l=1}^k T^{li} E(f_l \mid
   \YY_\delta) \right) \right\|_{L^2(\XX)} < \varepsilon\]
\end{quote}
holds for $t$-many $\delta$.
\label{conc:weak}
\end{enumerate}
\end{lemma}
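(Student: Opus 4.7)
The plan is to prove both parts of Lemma~\ref{final:lemma} simultaneously by induction on $k$, keeping the auxiliary level parameter $r$ in the background. For each $k \geq 2$ I will produce natural numbers $\pi_k$ and $\sigma_k$ (depending on $\varepsilon$ and $B$) such that the weak-form statement with $k$ functions $f_0, \ldots, f_{k-1}$ (i.e.\ product $\prod_{l<k}$ inside the conditional expectation) holds with $\theta < \omega^{\pi_k}$, and the strong-form statement with $k$ functions $f_1, \ldots, f_k$ (product $\prod_{l=1}^k$) holds with $\theta < \omega^{\sigma_k}$, in each case for $f_l \in L_0^\infty(2^r)$. The invariants I aim to maintain are $\pi_k \leq (k-1)^{2(k-1)}$ and $\sigma_k \leq k^{2k-1}$.

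The base case $k = 2$ is exactly Lemma~\ref{wmdim}, which supplies a finite natural-number bound and hence allows me to take $\pi_2 \leq 1 = (2-1)^{2(2-1)}$. For the inductive step from $k$ to $k+1$, assuming $\pi_k \leq (k-1)^{2(k-1)}$, I first apply Lemma~\ref{wmstrong} with $p = \pi_k$ to obtain the strong form with
\[
\sigma_k \;\leq\; k\pi_k + k - 1 \;\leq\; k(k-1)^{2(k-1)} + k - 1 \;\leq\; k^{2k-1},
\]
which already establishes Part~(2) at stage $k$. I then apply Lemma~\ref{wmweak}, using as hypothesis~1 the strong form just obtained (at the shifted level $2^{r+1}$) and as hypothesis~2 the induction hypothesis (at level $2^r$). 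The lemma produces the weak form for $k+1$ functions with
\[
\pi_{k+1} \;\leq\; \sigma_k + \pi_k - 1 \;\leq\; k^{2k-1} + (k-1)^{2(k-1)} - 1 \;\leq\; k^{2k},
\]
closing the induction and yielding Part~(1) after specialising to $r = 0$ and recalling that $L^\infty(\XX) = L_0^\infty(0)$.

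All analytic content has already been absorbed into Lemmas~\ref{wmdim}, \ref{wmstrong}, and \ref{wmweak}, so what remains is ordinal bookkeeping and elementary estimates such as $(k-1)^{2(k-1)} \leq k^{2k-1}$ and $k^{2k-1} + (k-1)^{2(k-1)} \leq 2k^{2k-1} \leq k^{2k}$ for $k \geq 2$. The main delicate point I anticipate is the interaction between the induction on $k$ and the level parameter $r$: Lemma~\ref{wmweak} requires the strong form at depth $2^{r+1}$ while the conclusion is at depth $2^r$, which forces the inductive bounds $\pi_k$ and $\sigma_k$ to be uniform in $r$. This uniformity must be tracked back through the combining lemmas (Lemmas~\ref{tech:pair}, \ref{tech:rel}, and \ref{tech:nested}) and through the base case, but once that has been verified the propagation of the invariants is routine.
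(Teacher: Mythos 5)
Your proposal is correct and takes essentially the same route as the paper: a simultaneous induction on $k$, uniform in the level parameter $r$, with Lemma~\ref{wmdim} as the base case, Lemma~\ref{wmstrong} passing from the conditional-expectation form to the norm form, and Lemma~\ref{wmweak} closing the loop, with the same exponent recursions ($\sigma_k \leq k\pi_k + k - 1$, $\pi_{k+1} \leq \sigma_k + \pi_k - 1$) and bookkeeping giving $k^{2k-1}$ and $k^{2k}$. The only differences are a cosmetic reindexing and the fact that the paper starts the induction at $k=1$, where it dismisses the norm-form statement as trivial, which also covers the $k=1$ instance of part~(\ref{conc:weak}) that your $k\geq 2$ induction leaves out.
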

\begin{proof}
 We will prove the stronger claim that these hold with any
 $\XX^{[r]}_\delta$ in place of $\XX$ and $L_0^\infty(r)$ in place of
 $L^\infty(\XX)$, simultaneously by induction on $k$.  For $k=1$,
 (\ref{conc:strong}) is Lemma \ref{wmdim} and (\ref{conc:weak}) is
 trivial.  Given (\ref{conc:strong}) for $k$, (\ref{conc:weak}) for
 $k+1$ follows by Lemma \ref{wmstrong}.  Given (\ref{conc:weak}) for
 $k+1$ and (\ref{conc:strong}) for $k$, (\ref{conc:strong}) for $k+1$
 follows by Lemma \ref{wmweak}.
\end{proof}

Theorem~\ref{main:theorem} and Corollary~\ref{main:corollary} follow
by taking $s$ to be the $\alpha^\theta$-sequence with $s_\beta =
\beta$ for every for every $\beta \leq \alpha^\theta$. 


\section{Logical issues}
\label{logic:section}

We now turn to a discussion of the logical methods behind the results
just obtained. This paper is part of a broader to effort to understand
the methods of ergodic theory and ergodic Ramsey theory in more
explicit computational or combinatorial terms \cite{avigad:unp:n},
using a body of logical techniques that fall under the heading ``proof
mining'' (see \cite{kohlenbach:08,kohlenbach:oliva:03a}, as well as
\cite[Section 6]{avigad:et:al:unp}). In particular, the results here
were obtained by employing a systematic rewriting of the
Furstenberg-Katznelson proof
\cite{furstenberg:katznelson:79,furstenberg:81,furstenberg:et:al:82},
based on G\"odel's \emph{Dialectica} functional interpretation
\cite{goedel:58,avigad:feferman:98}. Here we provide a ``rational
reconstruction'' of the methods we used.

The first step was to rewrite the key definitions and lemmas in the
Furstenberg-Katznelson proof in a way that makes the logical structure
of the assertions clear, and, in particular, distinguishes
quantification over ordinals from quantification over integers and
other objects that have a finitary representation. Limits and
projections involving the maximal distal factor, $\YY$, were expressed
directly in terms of the hierarchy $(\YY_\alpha)$. For example, the
assertion that the projection $E(f \mid \YY)$ is within $\varepsilon$
of $g$ can be expressed as $\exists \alpha \; \forall \beta > \alpha
\; \| E(f \mid \YY_\beta) - g \| \leq \varepsilon$, which asserts that
there is a level $\alpha$ beyond which the projection stays within
$\varepsilon$ of $g$. But it can also be expressed as $\forall \alpha
\; \exists \beta > \alpha \; (\| E(f \mid \YY_\beta) - g \| \leq
\varepsilon)$, which asserts that there are arbitrarily large levels
$\beta$ at which the projection is within $\varepsilon$ of $g$. The
statement that the sequence $A_n (f \otimes f)$ converges in $\XX
\times_\YY \XX$ can then be expressed as follows:
\begin{equation}
\label{erg:eq:a}
\forall \varepsilon > 0 \; \exists n \; \forall m \geq n, \alpha \;
\exists \beta > \alpha \; (\| A_m (f \otimes f) - A_n (f \otimes f)
\|_{L^2(\XX \times_{\YY_\beta} \XX)} < \varepsilon).
\end{equation}
Other statements central to the proof were analyzed in similar
ways. 

The proof of the mean ergodic theorem is not constructive
\cite{avigad:et:al:unp,avigad:unp:n}, and, in general, once cannot
extract bounds on $\beta$ in (\ref{erg:eq:a}). The next step was
therefore to seek a ``quasi-constructive'' interpretation of the proof
which yields more explicit ordinal bounds. To that end, we employed a
functional interpretation roughly along the lines of the one described
in \cite{avigad:towsner:unp} (which is, in turn, related to a similar
interpretation due to Feferman, 
described in \cite[Section 9.3]{avigad:feferman:98}). For example, in
(\ref{erg:eq:a}), the dependence of $\beta$ on $m$ can be eliminated
by choosing a $\beta_m$ for each $m$, and then taking the supremum:
\begin{equation*}
 \forall \varepsilon > 0 \; \exists n \; \forall \alpha \;
 \exists \beta \; (\beta > \alpha \land \forall m \geq n \; 
 \| A_m (f \otimes f) - A_n (f \otimes f) \|_{L^2(\XX \times_{\YY_\beta} \XX)} <
\varepsilon).
\end{equation*}
 We can then make the dependence of $\beta$ on $\alpha$ explicit:
\begin{multline}
\label{erg:eq:c}
\forall \varepsilon > 0 \; \exists n, \beta \; \forall \alpha \;
(\beta(\alpha) > \alpha \mathop{\land} \\
\forall m \geq n \; \| A_m (f \otimes f) - A_n (f \otimes f)
\|_{L^2(\XX \times_{\YY_{\beta(\alpha)}} \XX)} < \varepsilon).
\end{multline}
It is still impossible to obtain an explicit description of $\beta$,
but the Dialectica interpretation involves one final move. If
(\ref{erg:eq:c}) were false, then for some fixed $\varepsilon > 0$,
there would be a function $\alpha(n,\beta)$ that provided a
counterexample for each $n$ and $\beta$. Thus (\ref{erg:eq:c}) is
equivalent to the assertion that there is no such counterexample:
\begin{multline}
\label{erg:eq:d}
\forall \varepsilon > 0, \alpha \; \exists n, \beta \; 
(\beta(\alpha(n,\beta)) > \alpha(n,\beta) \mathop{\land} \\ 
\forall m \geq n \; 
\| A_m (f \otimes f) - A_n (f \otimes f) \|_{L^2(\XX
 \times_{\YY_{\beta(\alpha(m,\beta))}} \XX)} < \varepsilon).
\end{multline}
The logical methods now make it possible to extract an explicit
description of the function $\beta$ that ``foils'' the purported
counterexample $\alpha$. Informally, one obtains an algorithm for
$\beta$ which involves relatively explicit operations with ordinals,
such as taking maxima and suprema; application and iterations of
functions; and possibly noncomputable functions on the integers. (The
fact that transfinite induction is not used in the proof of the mean
ergodic theorem for $\XX \times_\YY \XX$ translates to the fact that
there are no transfinite recursions in the algorithm. Allowing noncomputable
functions on the integers allows us to ignore, for example, the universal 
quantifier over $m$ in (\ref{erg:eq:d}), and restrict focus to the parts of the
informal proof that bear on the ordinal bounds.) More
formally, one obtains a term in the calculus denoted
$\mathit{T_\Omega}$ in \cite{avigad:towsner:unp}, involving only the
operations just mentioned.

In the final result, Theorem~\ref{main:theorem}, there is only an
existential quantifier over ordinals. Methods of Tait \cite{tait:65}
(see also \cite[Section 4.4]{avigad:feferman:98}) suggest that the
explicit witnessing term extracted from the proof should be bounded
below the ordinal $\varepsilon_0$, which is the limit of the ordinals
$\omega^\omega, \omega^{\omega^\omega}, \ldots$. The final step of our
analysis was to seek a more direct route to obtain such a conclusion,
both to improve the bound and avoid relying on metamathematical
considerations. For example, if one is interested in bounds rather
than explicit witnesses in (\ref{erg:eq:d}), one can assume that the
function $\beta$ is increasing and continuous. Given any such
function, $\beta$, there are unboundedly many ordinals $\gamma$ that
are closed under $\beta$. Inspection of the translated proof of
(\ref{erg:eq:d}) showed that it was possible to think of the
counterexample function, $\alpha$, as taking such a sequence of
closure ordinals, and returning a sequence of bounds on
counterexamples; the proof showed that the original sequence could be
thinned to obtain a subsequence along which $\alpha$ fails. Once the
decision was made to cast the central results in those terms, it was
fairly easy to describe the algorithms extracted by the functional
interpretation in that way.

 The analysis yields not only the additional information provided by
 Theorem~\ref{main:theorem}, but also shows that the argument does
 not use the full axiomatic strength needed to carry out the
 transfinite iteration. The transfinite construction of the
 Furstenberg-Zimmer structure theorem requires an impredicative
 theory, like $\mathit{ID_1}$ or $\mathit{\Pi^1_1\mathord{-}CA}$,
 which is, from a proof-theoretic standpoint, quite strong; in
 contrast, the construction of the hierarchy up to stage
 $\omega^{\omega^\omega}$ requires only a principle of iterated
 arithmetic comprehension along that ordinal, which can be obtained,
 for example, in the predicative theory
 $\mathit{\Sigma^1_1\mathord{-}CA}$. See
 \cite{avigad:unp:n,avigad:feferman:98,simpson:99} for more
 information about the relevant theories.

 It is interesting to note, however, that the logical considerations
 drop out of the final results. The metamathematical results provide
 a deeper understanding of the role that strong nonconstructive
 principles play in ordinary mathematical reasoning, and provide a
 guide to interpreting particular mathematical proofs in more
 explicit terms. But if one is only interested in the latter, at the
 end of the day, one is left with a purely mathematical proof.


\end{document}